\numberwithin{equation}{section}
\numberwithin{figure}{section}
\theoremstyle{plain}
\newtheorem{thm}{\protect\theoremname}
\theoremstyle{definition}
\newtheorem{defn}[thm]{\protect\definitionname}
\newenvironment{lyxlist}[1]
	{\begin{list}{}
		{\settowidth{\labelwidth}{#1}
		 \setlength{\leftmargin}{\labelwidth}
		 \addtolength{\leftmargin}{\labelsep}
		 }}
	{\end{list}}
\theoremstyle{plain}
\newtheorem{lem}[thm]{\protect\lemmaname}
\theoremstyle{remark}
\newtheorem{rem}[thm]{\protect\remarkname}
\theoremstyle{definition}
\newtheorem{example}[thm]{\protect\examplename}
\theoremstyle{plain}
\newtheorem{cor}[thm]{\protect\corollaryname}
\providecommand{\corollaryname}{Corollary}
\providecommand{\definitionname}{Definition}
\providecommand{\examplename}{Example}
\providecommand{\lemmaname}{Lemma}
\providecommand{\remarkname}{Remark}
\providecommand{\theoremname}{Theorem}
\begin{document}
\title{Simplicial and Conical Decomposition of Positively Spanning Sets}
\date{November $1^{\mathrm{st}}$ 2019}
\author{Daniel Schoch}
\address{Nottingham University Malaysia Campus, School of Economics}
\begin{abstract}
We investigate the decomposition of a set $X$, which positively spans
the Euclidean space $\mathbb{R}^{d}$ into a set of minimal positive
bases, we call simplices, and into maximal sets positively spanning
pointed cones, i.e. cones with exactly one apex. For any set $X$,
let $\mathcal{S}\left(X\right)$ denote the set of simplex subsets
of $X$, and let $\ell\left(X\right)$ denotes the linear hull of
$X$. The set $X$ is said to fulfill the factorisation condition
if and only if for each subset $Y\subset X$ and each simplex $S\in\mathcal{S}\left(X\right)$,
$\ell\left(Y\right)\cap\ell\left(S\right)=\ell\left(Y\cap S\right)$.

We demonstrate that $X$ is a positive basis if and only if it is
the union of most $d$ simplices, and $X$ satisfies the factorization
condition. In this case, $X$ contains a linear basis $B$ such that
each simplex in $\mathcal{S}\left(X\right)$ has with $B$, all but
one exactly one element in common. We show that for sets positively
spanning $\mathbb{R}^{d}$, the set of subbases of $X$ forms a boolean
lattice, which can be embedded into the set $2^{\mathcal{S}\left(X\right)}$,
with isomorphy for positive bases.

Our second main result depending on the former is as follows. A finite
set $X\subset\mathbb{R}^{d}\setminus\left\{ 0\right\} $ can be written
as the union of at most $2^{d}$ maximal sets spanning pointed cones,
which, if $X$ is a positive basis, are tantamount to frames of the
cones. The inequality holds sharply if and only if $X$ is a cross,
that is, a union of 1-simplices derived from a linear basis of $\mathbb{R}^{d}$.
We also show that there can be at the most $2^{d}$ maximal subsets
of $X$ spanning pointed cones, when intersections of two of them
do not span a set of full dimension.
\end{abstract}

\keywords{Positive independence · positive spanning set · positive basis · combinatorial
geometry.}
\thanks{I owe thanks to C.S. for proofreading the manuscript.}
\maketitle

\section{Introduction}

Since Chandler Davis's seminal paper \cite{DAV54}, analysis of the
structure of sets of vectors positively generating solid cones (i.e.
linear spaces) has gained attention. The attempts to understand the
structure of such sets has been focused on positive bases, which are
minimal positive generating sets. These approaches involved the decomposition
into disjointed subsets \cite{REA65}, analysis through the Gale transformation
and related techniques \cite{SHE71,MAR84}, and generation through
certain kind of matrices \cite{DAV54}. The latter results include
the only characterizations of positive bases; however technical, and
beyond the obvious equvalence of the minimal generating sets being
positively independent.

Our approach is novel as far as it focuses on positive linear relations,
equations yielding zero, instead of linear relations. We identified
a simple and intuitive characterization of positive bases (theorem
\ref{thm:Inter}). Building on this, we show that each positive spanning
set (not only bases) can be decomposed into not greater than $2^{d}$
sets that positively spanns pointed cones.

Notation. All spaces $\mathbb{R}^{d}$ are Euclidean. By $\ell\left(X\right)$
and $\wp\left(X\right)$, we denote the linear and positive span of
$X$, respectively, i.e. the set of linear combinations of the following
form:
\[
a_{1}x_{1}+\cdots+a_{k}x_{k},\;x_{1},\ldots,x_{k}\in X
\]
with real and non-negative coefficients $a_{1},\ldots,a_{n}$, respectively.
By convention, the empty set spans, positively and linearly, the null
space $\left\{ 0\right\} $. For general properties of $\wp\left(.\right)$
see \cite{REG15}. A positive spanning set (PSS) is any set $X$ that
positively spans a linear space \cite{MAR84}. A positive basis is
a PSS with no proper subset that spans the same space. $\left|X\right|$
denotes the cardinality of $X$. For $A\subset\mathbb{R}^{d}$, $\mathrm{rint}\left(A\right)$
is the interior relative to the affine space spanned by $A$.
\begin{defn}
A set $X\subset\mathbb{R}^{d}$ is called linearly / positively /
negatively dependent, if and only if for some $x\in X$ we have the
relation $x\in\ell\left(X\setminus\left\{ x\right\} \right)$ / $x\in\wp\left(X\setminus\left\{ x\right\} \right)$
/ $-x\in\wp\left(X\setminus\left\{ x\right\} \right)$, respectively.
It is called linearly / positively / negatively independent, if it
is not dependent.
\end{defn}

The upper definition of linear dependence is obviously equivalent
to the standard definition. Linear independence implies positive and
negative independent, but the converse does not hold save in dimension
2 (lemma \ref{lem:LInd2D} and example \ref{exa:LIndnD}). Negative
independence of a set means that the positive cone generated by this
set is pointed with $0$ as the only apex; another equivalent term
is strictly one-sided \cite[p. 112]{BOL97} (see lemma \ref{lem:NegInd}).
\begin{defn}
A set $S\subset\mathbb{R}^{d}$ is called a simplex basis (or simplex,
for short) if and only if $0\in\wp\left(S\right)$, and no proper
subset has this property. For a set $X$, $\mathcal{S}\left(X\right)$
denote the set of all simplices contained in $X$.

In the literature, the term minimal basis refers to the simplex basis
\cite{DAV54}; However, this notation may lead yo confusion, because
the positive basis itself is a minimal positively spanning set. The
term is justified as a simplex basis and can be understood as the
vertices of a geometrical simplex with its origin in its relative
interior. For example, a 1-simplex consists of two opposite vectors
$x,-\alpha x$, $x\neq0$, $\alpha>0$.

The opposite extreme of a basis is the cross. This is generated from
a linear basis $B$ of the space by adding a negative multiple, forming
a 1-simplex, to each element of the basis. The convex hull of these
simplices all intersect at the origin. While the cardinality of a
simplex spanning $\mathbb{R}^{d}$ is always $d+1$, the cardinality
of a cross spanning $\mathbb{R}^{d}$ is $2d$ \cite{DAV54}. It is
well know that these are unique instances that yield sharp boundaries
for the cardinality of positive bases \cite{DAV54,HAR15,REG15}.

It is easy to see that each positively spanning set is the union of
its simplices (theorem \ref{thm:Gen}). For a positive basis, these
simplices overlap in common linear subspaces, as the following theorem
shows.
\end{defn}

\begin{thm}
\label{thm:Inter}The following statements are equivalent for a set
$X$ positively spanning $\mathbb{R}^{d}$:
\end{thm}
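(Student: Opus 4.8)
The plan is to prove the listed conditions equivalent by a cycle of implications linking the positive-basis property, the decomposition of $X$ into at most $d$ simplices together with the factorization condition, and the structural description via a distinguished linear basis $B$. Throughout I would lean on the defining feature of a simplex: a set $S$ is a simplex exactly when its elements satisfy a positive relation $\sum_{s\in S}\lambda_s s=0$ with every $\lambda_s>0$ that is unique up to a positive scalar, so that a $k$-dimensional simplex has precisely $k+1$ elements and any $k$ of them are linearly independent. By theorem~\ref{thm:Gen} the set $X$ is already the union of its simplices, so the entire difficulty lies in controlling how these simplices overlap.

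First I would treat the passage from \emph{positive basis} to the structural statement. Since $X$ positively spans $\mathbb{R}^{d}$ it contains a linear basis $B$ with $\left|B\right|=d$. For each $x\in X\setminus B$ I would write $x$ in the coordinates of $B$ and read off, from the resulting linear relation, a minimal positive relation whose support $S_{x}$ is a simplex with $S_{x}\subseteq B\cup\{x\}$ and $x\in S_{x}$; thus $\left|S_{x}\setminus B\right|=1$. Conversely, no simplex can sit inside the linearly independent set $B$, so every simplex meets $X\setminus B$, and the positive-basis hypothesis (no vector is redundant) forces each simplex to meet $X\setminus B$ in a single element, which is the \emph{all but one} property. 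Counting then gives $X=\bigcup_{x\in X\setminus B}S_{x}$, exhibiting $X$ as a union of at most $\left|X\setminus B\right|\le d$ simplices, since each element of $B$, being indispensable, must lie in some $S_{x}$.

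Next I would verify the factorization condition under the same hypothesis. The inclusion $\ell\left(Y\cap S\right)\subseteq\ell\left(Y\right)\cap\ell\left(S\right)$ is automatic, so only the reverse inclusion carries content. Here the \emph{all but one} description makes each $\ell\left(S\right)$ a coordinate-type subspace relative to $B$ (spanned by a subset of $B$ together with one extra vector), and I would show that the family of subspaces so generated is modular in the precise sense demanded, yielding $\ell\left(Y\right)\cap\ell\left(S\right)\subseteq\ell\left(Y\cap S\right)$. For the return implication I would assume that $X$ is a union of at most $d$ simplices satisfying factorization and deduce positive minimality: if some $x\in X$ were redundant, then $X\setminus\{x\}$ would still positively span, producing a second positive relation through the remaining simplices whose supports, intersected via the factorization identity, would collapse one simplex into another and contradict the uniqueness of the positive relation inside a simplex.

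The hard part will be exactly this last step. The factorization condition is engineered to forbid two distinct simplices from sharing more linear structure than their common elements account for, and the delicate point is to translate the subspace identity $\ell\left(Y\right)\cap\ell\left(S\right)=\ell\left(Y\cap S\right)$ into a statement about \emph{positive} relations strong enough to exclude any redundant vector. I expect to need the uniqueness of the positive relation in each simplex together with careful bookkeeping of which elements of $B$ appear in which $S_{x}$; once that dictionary between the combinatorial overlap $Y\cap S$ and the linear overlap $\ell\left(Y\right)\cap\ell\left(S\right)$ is in place, the cycle of implications closes and all the stated conditions become equivalent.
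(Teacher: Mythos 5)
Your outline identifies the right landmarks but leaves the two load-bearing steps unproved, and one of them is set up in a way that cannot work as stated. First, you start from an \emph{arbitrary} linear basis $B\subset X$ and claim that writing $x\in X\setminus B$ in $B$-coordinates yields a positive relation supported on $S_{x}\subseteq B\cup\{x\}$. A linear relation $x=\sum_{i}a_{i}b_{i}$ is a positive relation only when every $a_{i}\leq0$, i.e.\ only when $x\in-\wp\left(A\right)$ for some $A\subset B$; that is precisely the structural conclusion (iv) you are trying to reach, not a consequence of choosing coordinates. Likewise your derivation of the ``all but one'' property from ``no vector is redundant'' is asserted, not argued: if a simplex $S$ met $X\setminus B$ in two elements, no positive dependency is immediately visible. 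The paper goes the other way around: it first proves (i)$\Rightarrow$(ii), then \emph{constructs} $B$ as $\bigcup_{i}\left(S_{i}\setminus\{x_{i}\}\right)$ from a minimal simplicial cover and uses the factorization identity (ii) to show this union is linearly independent; the ``all but one'' property then falls out of $\ell\left(S\right)=\ell\left(S\right)\cap\ell\left(B\right)=\ell\left(S\cap B\right)$ together with lemma \ref{lem:Sim}.

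Second, the two implications you defer --- positive independence implies factorization, and factorization implies positive independence --- are exactly where all the work lies, and ``the family of subspaces is modular'' and ``the supports would collapse one simplex into another'' are placeholders rather than arguments. The paper's proof of (i)$\Rightarrow$(ii) picks $y$ in the orthogonal complement of $\ell\left(Y\cap S\right)$ inside $\ell\left(Y\right)\cap\ell\left(S\right)$ and shows that $y$ and $-y$ cannot both lie in $\mathrm{skel}\left(S\right)$ (else $S$ would contain a proper subset with $0$ in its positive span); lemma \ref{lem:Sxy} then converts $-y\notin\mathrm{skel}\left(S\right)$ into an explicit positive dependency $x\in\wp\left(S\left[x\rightarrow y\right]\right)$. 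You have no substitute for this skeleton mechanism. For the converse the paper inducts on $\left|\mathcal{S}\left(X\right)\right|$, using the Boolean lattice of theorem \ref{thm:Lat} to produce a complementary positively spanning subset $Y=S'$ and splitting a hypothetical dependency $x=y+z$ across $\ell\left(Y\right)$ and $\ell\left(S\right)$, so that the factorization identity forces $z=0$ and pushes the dependency into the smaller set $Y$. Without some such descent, your appeal to ``uniqueness of the positive relation inside a simplex'' does not close the cycle. Note also that your outline never distinguishes condition (iii) (factorization for positively spanning $Y$ only) from (ii), yet (iii)$\Rightarrow$(i) is the direction that actually carries the induction.
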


\begin{lyxlist}{00.00.0000}
\item [{(i)}] $X$ is positively independent.
\item [{(ii)}] For all subsets $Y\subset X$ and all simplices $S\in\mathcal{S}\left(X\right)$
\[
\ell\left(Y\right)\cap\ell\left(S\right)=\ell\left(Y\cap S\right).
\]
\item [{(iii)}] For all positively spanning subsets $Y\subset X$ and all
simplices $S\in\mathcal{S}\left(X\right)$ 
\[
\ell\left(Y\right)\cap\ell\left(S\right)=\ell\left(Y\cap S\right).
\]
\item [{(iv)}] We can write $X=B\cup\left\{ x_{1},\ldots,x_{n}\right\} $,
$1\leq n\leq d$, with a basis $B$ of $\ell\left(X\right)$, such
that each $x_{i}\in-\mathrm{rint}\wp\left(A_{i}\right)$ for some
$A_{i}\subset B$, and $A_{i}\nsubseteq A_{j}$ for $i\neq j$. Moreover,
each simplex $S\in\mathcal{S}\left(X\right)$ has all but one element
in common with $B$.
\end{lyxlist}
Following Reay \cite{REA65}, the Bonnice-Klee theorem can be derived
from a corollary to condition (iv) (corollary \ref{cor:Reay}). Our
first main result is the previous theorem along with the following:
We show that for any positively spanning set $X$, the set of subsets
positively generating linear subspaces form a boolean lattice isomorphic
to a sublattice of $2^{\mathcal{S}\left(X\right)}$, with isomorphy
if $X$ is a positive basis (theorem \ref{thm:Lat} and corollary
\ref{cor:Iso}).

The second main result emanates from the following theorem.
\begin{defn}
By $\mathcal{M}\left(X\right)$, we denote the set of all maximal
negatively independent subsets of $X$.
\end{defn}

\begin{thm}
\label{thm:Main}Let $X$ be a positive basis of $\mathbb{R}^{d}$.
Then $X$ can be written as the union of $n$ simplices such that
the following inequalities hold
\begin{eqnarray*}
1\leq & n & \leq d,\\
d+1\leq & \left|X\right| & \leq2d,\\
d+1\leq & \left|\mathcal{M}\left(X\right)\right| & \leq2^{d},
\end{eqnarray*}
with equality in each of the upper inequalities (and therefore in
all) if and only if $X$ is a cross, and equalityin one or all of
the lower equations if and only if $X$ is a simplex.
\end{thm}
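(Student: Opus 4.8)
The plan is to read every clause of the theorem off the canonical decomposition supplied by Theorem \ref{thm:Inter}(iv). First I would fix a writing $X=B\cup\{x_{1},\ldots,x_{n}\}$ with $B$ a basis of $\mathbb{R}^{d}$, $A_{i}\subseteq B$, $x_{i}\in-\mathrm{rint}\,\wp(A_{i})$, the $A_{i}$ forming an antichain, and each simplex meeting $B$ in all but one point. Since each $x_{i}$ is a strictly negative combination of $A_{i}$ in the basis $B$, no $x_{i}$ lies in $B$, and distinct $A_{i}$ give distinct $B$-supports, so the $x_{i}$ are distinct; hence $\left|X\right|=d+n$ and the range $1\le n\le d$ is exactly that of (iv), which already yields $d+1\le\left|X\right|\le 2d$. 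I would then show $\mathcal{S}(X)=\{S_{1},\ldots,S_{n}\}$ with $S_{i}=\{x_{i}\}\cup A_{i}$: the ``all but one'' clause writes any simplex as $\{x_{i}\}\cup C$ with $C\subseteq B$, and as the representation of $-x_{i}$ in the basis $B$ is unique with support $A_{i}$, minimality of the simplex forces $C=A_{i}$. By Theorem \ref{thm:Gen}, $X=\bigcup\mathcal{S}(X)$, so $\bigcup_{i}A_{i}=B$ and $X$ is the union of exactly $n$ simplices.

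Next I would describe $\mathcal{M}(X)$ combinatorially. A subset $Y\subseteq X$ is negatively independent iff it contains no $S_{i}$; writing $Y=B_{0}\cup\{x_{i}:i\in I\}$ with $B_{0}=Y\cap B$, this says $A_{i}\nsubseteq B_{0}$ for all $i\in I$. Maximality then forces $A_{j}\subseteq B_{0}$ for $j\notin I$ (else $x_{j}$ could be added) and, for every $b\in B\setminus B_{0}$, some $i$ with $A_{i}\setminus B_{0}=\{b\}$ (else $b$ could be added). The first two conditions determine $I=\{i:A_{i}\nsubseteq B_{0}\}$ from $B_{0}$ alone, so $Y\mapsto Y\cap B$ is injective on $\mathcal{M}(X)$, giving $\left|\mathcal{M}(X)\right|\le 2^{\left|B\right|}=2^{d}$. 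For the lower bound I would exhibit the $d+1$ admissible sets $B_{0}=B$ and $B_{0}=B\setminus\{b\}$ directly: each satisfies the three conditions because $\bigcup_{i}A_{i}=B$, producing $d+1$ distinct members of $\mathcal{M}(X)$.

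For the equality cases I would argue as follows. At the lower end, $n=1$ forces $A_{1}=B$ and $X=B\cup\{x_{1}\}$, a single $(d{+}1)$-point simplex, and conversely; moreover $\left|\mathcal{M}(X)\right|=d+1$ precisely when no admissible $B_{0}$ has $\left|B\setminus B_{0}\right|\ge 2$, which, by taking two incomparable members $A_{1},A_{2}$ and $b_{1}\in A_{1}\setminus A_{2}$, $b_{2}\in A_{2}\setminus A_{1}$ whenever $n\ge 2$, happens iff $n=1$; thus all three lower equalities are equivalent to $X$ being a simplex. At the upper end, $X$ is a cross exactly when every $A_{i}$ is a singleton, which at once gives $n=d$, $\left|X\right|=2d$, and (checking the third condition for every $B_{0}$) $\left|\mathcal{M}(X)\right|=2^{d}$; conversely $\left|\mathcal{M}(X)\right|=2^{d}$ forces $Y\mapsto Y\cap B$ to be surjective, hence $B_{0}=\emptyset$ admissible, hence every $A_{i}$ a singleton.

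The remaining implication $n=d\Rightarrow$ cross is the hard part, and the only place where positive independence, rather than pure bookkeeping, is needed. Here I would prove the key lemma that in a positive basis every $A_{i_{0}}$ has a \emph{private} element $b\in A_{i_{0}}$ lying in no other $A_{j}$. If not, each $b\in A_{i_{0}}$ is covered by another simplex, and combining the circuit relations $r_{i}\colon x_{i}+\sum_{b\in A_{i}}\lambda_{ib}b=0$ with coefficient $-1$ on $r_{i_{0}}$ and suitably large nonnegative coefficients on the covering relations yields a relation $\sum_{v\in X}c_{v}v=0$ whose only negative coefficient is that of $x_{i_{0}}$, i.e. $x_{i_{0}}\in\wp(X\setminus\{x_{i_{0}}\})$, contradicting positive independence. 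Granting the lemma, when $n=d$ the $d$ private elements are distinct points of the $d$-element set $B$, so they exhaust $B$, each $b\in B$ lies in exactly one $A_{i}$, the $A_{i}$ are disjoint singletons, and $X$ is a cross. This closes the chain showing all three upper equalities are equivalent to $X$ being a cross.
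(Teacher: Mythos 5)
Your proposal is correct, and while it starts from the same place as the paper --- everything is read off the canonical form $X=B\cup\{x_{1},\ldots,x_{n}\}$ of Theorem \ref{thm:Inter}(iv), which immediately gives $1\le n\le d$ and $|X|=d+n$ --- it diverges genuinely in the one nontrivial count, the bound $\left|\mathcal{M}\left(X\right)\right|\le2^{d}$. The paper disjointifies the sets $A_{i}$ into blocks $B_{i}$, proves each block nonempty by a positive-dependence argument, identifies an element of $\mathcal{M}\left(X\right)$ with a choice of one excluded element per block $X_{i}=B_{i}\cup\{x_{i}\}$, and then invokes the product inequality $\prod\left(\left|B_{i}\right|+1\right)\le2^{d}$ (Lemma \ref{lem:Prod}). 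You instead exhibit the injection $Y\mapsto Y\cap B$ of $\mathcal{M}\left(X\right)$ into $2^{B}$, with the index set $I=\{i:A_{i}\nsubseteq Y\cap B\}$ recovered from the trace by maximality, which yields $2^{d}$ with no product estimate and no disjointification; it also buys you a crisp description of exactly which traces $B_{0}\subseteq B$ occur, from which both equality characterizations ($B_{0}=\emptyset$ admissible iff every $A_{i}$ is a singleton; only traces of codimension $\le1$ admissible iff $n=1$) and the explicit $d+1$ lower bound fall out more cleanly than in the paper, which merely asserts the case $n>1$ is ``easy to see.'' Your ``private element'' lemma for the implication $n=d\Rightarrow$ cross is the same idea as the paper's proof that each $B_{k}\neq\emptyset$ (if $A_{k}\subseteq\bigcup_{j\neq k}A_{j}$ then $x_{k}\in\ell\left(A_{k}\right)\subseteq\wp\bigl(\bigcup_{j\neq k}S_{j}\bigr)$, a positive dependence), just stated in the stronger ``no other $A_{j}$'' form and with the coefficients written out; the paper instead closes that case using the antichain condition $A_{i}\nsubseteq A_{j}$ together with $\left|B_{i}\right|=1$. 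Both routes are sound; yours is the more self-contained and arguably the more transparent bookkeeping.
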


There are two extensions for the last inequality. We show that each
positively spanning set can be decomposed into not more than $2^{d}$
sets that positively span pointed cones (theorem \ref{thm:MainExt}).
Our last theorem (and second main result) extends the last inequality
to the case when $X$ is not necessarily a positive basis. Another
way to state this is that there can be no more than $2^{d}$ maximally
pointed $d$-cones in $\mathbb{R}^{d}$ (maximally pointed in the
sense that no element of the frame of another cone can be added to
it and keep it pointed), such that two cones overlap only at the boundary.
\begin{thm}
\label{thm:Max}Let $X$ positively span $\mathbb{R}^{d}$, and $\mathcal{A}\subset\mathcal{M}\left(X\right)$
such that for all $A,B\in\mathcal{A}$, $A\neq B$, $\wp\left(A\cap B\right)$
does not have full dimension. Then $\left|\mathcal{A}\right|\leq2^{d}$,
with equality if and only if $X$ is a union of at least $d$ 1-simplices,
some $d$ of which form a cross.
\end{thm}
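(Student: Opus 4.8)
My plan is to translate the combinatorial hypothesis into the geometry of pointed $d$-cones, bound the family against the $2^{d}$ orthants of a linear basis, and then read off the extremal configuration. First I would record the correspondence between $\mathcal{M}(X)$ and separating functionals. By lemma \ref{lem:NegInd} a set $A\subseteq X$ is negatively independent exactly when $\wp(A)$ is pointed, equivalently when some $\phi$ satisfies $\phi(x)>0$ for all $x\in A$; and if $A$ is in addition maximal then $A=\{x\in X:\phi(x)>0\}$ for every such $\phi$, because the right-hand set is again one-sided and contains $A$. Choosing $\phi$ off the finitely many hyperplanes $x^{\perp}$ shows that the members of $\mathcal{M}(X)$ are precisely the open sign cells of the central arrangement $\{x^{\perp}:x\in X\}$ in the dual space. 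I would then use that $X$ spans to prove that each $A\in\mathcal{M}(X)$ is full-dimensional: were $\ell(A)\neq\mathbb{R}^{d}$, pick $y\in X\setminus\ell(A)$ and $\eta$ vanishing on $\ell(A)$ with $\eta(y)>0$; then $\phi+t\eta$ is positive on $A\cup\{y\}$ for large $t$, so $A\cup\{y\}$ is negatively independent, contradicting maximality. Thus every $\wp(A)$ is a pointed $d$-cone, and for $A,B\in\mathcal{A}$ the hypothesis $\dim\wp(A\cap B)<d$ says exactly that $\wp(A)$ and $\wp(B)$ meet only along their boundaries.

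For the bound I would fix a linear basis $B=\{b_{1},\dots,b_{d}\}\subseteq X$ and use it as a reference frame. Because each $b_{i}^{\perp}$ is one of the arrangement hyperplanes, every cell lies in a single orthant of the dual space, giving a map $s:\mathcal{A}\to\{+,-\}^{d}$, $s(A)=(\mathrm{sign}\,\phi(b_{1}),\dots,\mathrm{sign}\,\phi(b_{d}))$. The guiding observation is that every member of $\mathcal{A}$ lying in a fixed orthant $\varepsilon$ contains in its positive part the whole of $Y_{\varepsilon}:=\{x\in X:\varepsilon_{i}x_{i}\geq0\ \forall i\}$; hence if $Y_{\varepsilon}$ spans $\mathbb{R}^{d}$, any two such members have a spanning intersection and cannot both lie in $\mathcal{A}$. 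When $X$ contains a cross adapted to $B$ this applies to every orthant (there $Y_{\varepsilon}\supseteq\{\varepsilon_{i}b_{i}\}$), so $s$ is injective and $|\mathcal{A}|\leq2^{d}$ at once; this is also what makes $2^{d}$ attainable.

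The main obstacle is the general case, where some $Y_{\varepsilon}$ need not span, so that $s$ can identify two compatible cells and the naive injection breaks down. Here I would argue through Theorem \ref{thm:Main}: enlarge $\mathcal{A}$ to a maximal valid family, show (using maximality together with the covering argument above) that its cones tile $\mathbb{R}^{d}$, and extract from their extreme rays --- which are elements of $X$ --- a positive basis whose maximal pointed cones are exactly the tiles. Theorem \ref{thm:Main} then bounds their number by $2^{d}$. Making this reduction precise --- in particular that a maximal valid family tiles and that the tiles realise the negatively-independent cones of a genuine positive basis --- is the step I expect to be delicate, and it is where the hypothesis that $X$ spans and the structure theorem \ref{thm:Inter}(iv) should be brought to bear.

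Finally, for equality, $|\mathcal{A}|=2^{d}$ forces $s$ to be a bijection onto $\{+,-\}^{d}$, so each orthant carries exactly one tile and the tiles together cover $\mathbb{R}^{d}$. Tracking the extreme rays shared between the cones of opposite orthants forces $\pm b_{i}\in X$ for a basis $B$, i.e. $X$ contains a cross; moreover any element in a genuinely new direction would subdivide one of the orthant cones and destroy the bijection, so every remaining element is collinear with some $b_{i}$, exhibiting $X$ as the asserted union of $1$-simplices. Conversely, for such an $X$ the $2^{d}$ closed orthants are maximal pointed cones with pairwise disjoint interiors and realise $|\mathcal{A}|=2^{d}$, and adjoining collinear $1$-simplices affects neither $\mathcal{M}(X)$ nor the count.
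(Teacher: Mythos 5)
Your setup is sound as far as it goes: the identification of members of $\mathcal{M}\left(X\right)$ with positive parts $\left\{ x\in X\mid\phi\left(x\right)>0\right\} $ of certifying functionals is correct (though not every open sign cell of the arrangement yields a \emph{maximal} set, so ``precisely the open sign cells'' overstates it), your re-derivation of Lemma \ref{lem:MDim} is fine, and the orthant map $s$ does give $\left|\mathcal{A}\right|\leq2^{d}$ whenever every orthant set $Y_{\varepsilon}$ spans. But that last hypothesis is essentially the cross case, and the general case --- which is the entire content of the theorem --- is exactly the step you defer as ``delicate'' and never carry out. Worse, the route you sketch for it rests on a claim that is not justified by the hypothesis: the assumption controls $\dim\wp\left(A\cap B\right)$, the span of the \emph{common generators}, not $\wp\left(A\right)\cap\wp\left(B\right)$. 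Your parenthetical that the hypothesis ``says exactly that $\wp\left(A\right)$ and $\wp\left(B\right)$ meet only along their boundaries'' is the nontrivial direction and is asserted without proof; without it the proposed tiling of $\mathbb{R}^{d}$ does not get off the ground, and even granting a tiling you give no argument that the extreme rays assemble into a positive basis whose maximal negatively independent sets correspond to the tiles. So the bound is established only in the special case, and the equality analysis (which presupposes that $s$ is a bijection) inherits the same gap.

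The ingredient you are missing is the paper's Lemma \ref{lem:MRed}. The paper fixes an arbitrary positive basis $Y\subset X$ and considers the restriction map $A\mapsto A\cap Y$. The lemma shows this maps $\mathcal{M}\left(X\right)$ into $\mathcal{M}\left(Y\right)$, and --- crucially --- that a collision $A\cap Y=B\cap Y$ forces $\ell\left(A\cap B\right)=\mathbb{R}^{d}$, hence $\wp\left(A\cap B\right)$ full-dimensional, which the hypothesis on $\mathcal{A}$ forbids. Thus the restriction map is injective on $\mathcal{A}$ and $\left|\mathcal{A}\right|\leq\left|\mathcal{M}\left(Y\right)\right|\leq2^{d}$ by Theorem \ref{thm:Main}; the equality discussion then proceeds via Theorem \ref{thm:MainExt} to rule out any simplex that is not a $1$-simplex. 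If you want to rescue your draft, replace the tiling/extreme-ray step with this restriction argument (whose proof goes through Lemma \ref{lem:MaxInd} and the identity $\ell\left(A\cap Y\right)=\ell\left(Y\right)$ for $A\in\mathcal{M}\left(X\right)$); the dual sign-cell picture can stay as motivation but does not by itself deliver the bound.
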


\section{Preliminaries}

More notation. For any set $A$ we write $A\left[x\rightarrow y\right]$
for $\left(A\setminus\left\{ x\right\} \right)\cup\left\{ y\right\} $.
Let $\mathrm{conv}\left(X\right)$ denote the convex hull of $X$.
\begin{lem}
\label{lem:NegInd}For a set $X\subset\mathbb{R}^{d}$ the following
is equivalent.
\end{lem}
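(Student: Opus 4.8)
The plan is to prove the equivalences by a short cycle that reduces every condition to the existence of a nontrivial nonnegative representation of the origin. I read the conditions of this lemma as: that $X$ is negatively independent; that $\wp\left(X\right)$ is pointed, i.e. $\wp\left(X\right)\cap\left(-\wp\left(X\right)\right)=\left\{ 0\right\} $; and that $X$ is strictly one-sided, i.e. there is a $c$ with $\langle c,x\rangle>0$ for every $x\in X$. First I would unwind the definition of negative dependence purely algebraically: a witness $-x=\sum_{y\neq x}c_{y}y$ with $c_{y}\geq0$ is the same datum as a relation $x+\sum_{y\neq x}c_{y}y=0$, that is, a nonnegative combination of members of $X$ equal to $0$ in which at least the coefficient of $x$ is strictly positive; conversely any nontrivial such relation $\sum_{z}d_{z}z=0$ can be solved for a term with $d_{x}>0$ to recover a negative-dependence witness. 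Hence negative \emph{independence} is exactly the assertion that $0$ admits only the trivial nonnegative representation, and in passing this forces $0\notin X$, ruling out the degenerate case. By theorem~\ref{thm:Gen} one may also phrase this as ``$X$ contains no simplex''.

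Next I would establish the equivalence with pointedness. If $\wp\left(X\right)$ is not pointed, pick $v\neq0$ with $v=\sum a_{i}x_{i}$ and $-v=\sum b_{i}x_{i}$ (nonnegative coefficients); adding gives $\sum\left(a_{i}+b_{i}\right)x_{i}=0$, and since $v\neq0$ some $a_{i}>0$, so the relation is nontrivial and $X$ is negatively dependent. For the converse, from a nontrivial relation isolate a term $d_{x}x$ with $d_{x}>0$: then $d_{x}x=-\sum_{z\neq x}d_{z}z$ exhibits a nonzero vector (as $x\neq0$) lying in $\wp\left(X\right)\cap\left(-\wp\left(X\right)\right)$, so the cone is not pointed. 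This yields negative independence if and only if pointedness.

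Finally, strict one-sidedness. The direction from a separating functional is immediate: if $\langle c,x\rangle>0$ for all $x\in X$, then applying $c$ to any nonnegative combination equal to $0$ forces every coefficient to vanish, so no nontrivial relation exists. The substantive converse is where I expect the main work. I would observe that a nontrivial nonnegative relation equal to $0$, after normalising its coefficients to sum to $1$, is a convex combination of points of $X$ equal to $0$; thus negative independence is equivalent to $0\notin\mathrm{conv}\left(X\right)$. For finite $X$ the set $\mathrm{conv}\left(X\right)$ is compact and convex, so the separating hyperplane theorem delivers a $c$ with $\langle c,y\rangle>0$ for every $y\in\mathrm{conv}\left(X\right)$, in particular for all $x\in X$, which is precisely strict one-sidedness.

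The hard part will be this last step: securing a \emph{strict} separating functional, and, should the lemma be intended for possibly infinite $X$, controlling the closure of $\mathrm{conv}\left(X\right)$ or of $\wp\left(X\right)$, since an offending line could otherwise be hidden in the boundary. I would handle this by restricting to the finite setting used elsewhere in the paper, or by passing to a finite subset realising the relation in question, so that compactness makes the separation clean; the remaining implications are then elementary rearrangements of positive relations.
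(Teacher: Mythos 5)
Your proposal is correct in the finite setting (the one the paper actually works in), but it reaches the separating functional by a genuinely different route. The paper argues the hard step contrapositively: assuming no strictly positive functional exists, it deduces that the cones $\wp\left(X\right)$ and $-\wp\left(X\right)$ share a nonzero vector, turns that into a nonnegative relation summing to zero, and passes to a minimal support to exhibit a simplex. You argue directly: negative independence is equivalent to the absence of a nontrivial nonnegative relation, hence to $0\notin\mathrm{conv}\left(X\right)$, and for finite $X$ the strict separation of the origin from the compact convex set $\mathrm{conv}\left(X\right)$ is completely standard. Your version is arguably more robust, since the paper's claim that two cones meeting only at the origin can be \emph{strictly} separated silently uses that $\wp\left(X\right)$ is closed (true for finite $X$ because finitely generated cones are polyhedral, but false in general --- a lexicographically ordered cone is pointed yet admits no strictly positive functional); your compactness argument sidesteps this, and your caveat about infinite $X$ is well placed, as the equivalence genuinely fails without finiteness. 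Two small repairs: the equivalence between ``no nontrivial nonnegative relation'' and $\mathcal{S}\left(X\right)=\emptyset$ does not need Theorem \ref{thm:Gen} --- it follows from the definition of a simplex by taking a minimal subset of the (finite) support of a relation; and your intermediate characterisation ``negatively independent iff $\wp\left(X\right)$ is pointed'' breaks down in the degenerate case $0\in X$ (the cone can be pointed while $X$ is negatively dependent), though this is harmless because pointedness is not among the lemma's listed conditions and is not needed for the cycle you actually chain together.
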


\begin{description}
\item [{(i)}] $X$ is negatively independent,
\item [{(ii)}] $\mathcal{S}\left(X\right)=\emptyset$,
\item [{(iii)}] There is a $z$ with $z\cdot x>0$ for all $x\in X$.
\end{description}
\begin{proof}
``(i)$\Rightarrow$(ii)'': Let $S\in\mathcal{S}\left(X\right)$
with $S=\left\{ x_{1},\ldots,x_{n}\right\} $. Then there are $\alpha_{i}\geq0$,
not all zero, with $\sum_{i=1}^{n}\alpha_{i}x_{i}=0$. Hence $x_{k}=-\sum_{i=1}^{n}\left(\alpha_{i}/\alpha_{k}\right)x_{k}$
for one $k$. Thus $X$ is negatively dependent.

``(ii)$\Rightarrow$(iii)'': Assume that for all $z$ there is an
$x\in X$ with $z\cdot x\leq0$. Let $C=\wp\left(X\right)$. If $C$
and $-C$ intersected only in $0$, then $C\setminus\left\{ 0\right\} $
and $-C\setminus\left\{ 0\right\} $ could be strictly separated by
a hyperplane through the origin. Let $z$ be the normal vector orthogonal
to the hyperplane lying in the half space containing $C$, then $z\cdot x>0$
for all $x\in C\setminus\left\{ 0\right\} $, a contradiction to the
assumption. Thus $C$ and $-C$ contain a common nonzero vector $x$.
There are $x_{1},\ldots,x_{m}\in X$ and $\alpha_{1},\ldots,\alpha_{m}>0$
such that 
\[
x=\sum_{i=1}^{k}\alpha_{i}x_{i},\text{ and }-x=\sum_{i=k+1}^{m}\alpha_{i}x_{i},
\]
and $\sum_{i=1}^{m}\alpha_{i}x_{i}=0$. Thus $\left\{ x_{1},\ldots,x_{m}\right\} $
contain a minimal set with this property, which forms a simplex $S\in\mathcal{S}\left(X\right)$.

``(iii)$\Rightarrow$(i)'': Assume $z\cdot x>0$ for all $x\in X$
for some $z\in\mathbb{R}^{d}\setminus\left\{ 0\right\} $. Assume
further $X$ is negatively dependent. Then there is $x_{0},\ldots,x_{n}\in X$
and $\alpha_{1},\ldots,\alpha_{n}>0$ with $x_{0}=-\sum_{i=1}^{n}\alpha_{i}x_{i}$.
But then, $z\cdot x_{0}>0$ and $z\cdot\sum_{i=1}^{n}\alpha_{i}x_{i}>0$,
a contradiction. Hence $X$ is negatively independent.
\end{proof}
The following result is trivial.
\begin{rem}
\label{rem:Bas}Let $X=\left\{ x_{1},\ldots,x_{m}\right\} $ with
$m\leq d=\mathrm{dim}\wp\left(X\right)$. Then $X$ is a linear basis
of $\ell\left(X\right)$.
\end{rem}

\begin{proof}
We choose a maximal linear independent subset $B\subset X$. Then
$\ell\left(B\right)=\ell\left(X\right)$, and $\mathrm{dim}\ell\left(B\right)\geq d$.
But this requires $\left|B\right|\geq d$, hence $B=X$ and $m=d$.
\end{proof}
Linearly independent sets are positively and negatively independent.
The converse holds only in two dimensions, as the following two results
show.
\begin{lem}
\label{lem:LInd2D}Let $X\subset\mathbb{R}^{2}\setminus\left\{ 0\right\} $
. Then $X$ is linearly independent if and only if it is both positively
and negatively independent.
\end{lem}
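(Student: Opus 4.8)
The plan is to prove the two implications separately, since only the converse is special to the plane. The forward direction—linear independence implies both positive and negative independence—holds in every dimension and is essentially immediate: a positive dependence $x=\sum_i\alpha_i x_i$ (with $\alpha_i\ge 0$) or a negative dependence $-x=\sum_i\alpha_i x_i$ rearranges to a nontrivial linear relation with nonzero coefficient on $x$, so either one already forces linear dependence. I would dispose of this in one line, in line with the observation already recorded in the text before the lemma.

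For the converse I would argue by contraposition: assuming $X$ is linearly dependent in $\mathbb{R}^{2}$, I produce either a positive or a negative dependence. The first step is to pass to a minimal dependence: choose a nontrivial relation $\sum_{i=1}^{k}\alpha_i x_i=0$ with distinct $x_i\in X$, all $\alpha_i\neq 0$, and support as small as possible. Minimality makes every proper subsystem independent, so $\{x_1,\dots,x_{k-1}\}$ is linearly independent, giving $k-1\le\dim\mathbb{R}^{2}=2$; since the $x_i$ are nonzero we cannot have $k=1$, hence $k\in\{2,3\}$. This circuit-size bound is the one genuinely two-dimensional ingredient, and the whole argument rests on it.

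The remaining work is a sign analysis of the coefficients, which I expect to be routine. If $k=2$ then $x_1$ and $x_2$ are parallel; they are a positive multiple of one another (positive dependence) when $\alpha_1,\alpha_2$ have opposite signs, and antipodal, so that $-x_2\in\wp(\{x_1\})$ (negative dependence), when the signs agree. If $k=3$, then after possibly negating the whole relation the sign pattern of $(\alpha_1,\alpha_2,\alpha_3)$ is either all positive or two positive and one negative. In the all-positive case, isolating any one term writes $-x_i$ as a nonnegative combination of the other two, a negative dependence; in the mixed case, isolating the lone opposite sign writes that $x_i$ itself as a nonnegative combination of the other two, a positive dependence. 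In every case the dependence uses only vectors of $X\setminus\{x_i\}$, so it is genuinely a dependence of $X$.

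The only subtlety worth flagging is the bookkeeping: checking that each case lands in exactly one of ``positive'' or ``negative'' dependence, and noting that the degenerate situation in which all of $X$ lies on a single line through the origin is already absorbed into the $k=2$ branch. No result beyond the circuit bound and the definitions is needed; Lemma \ref{lem:NegInd} is not required, though its half-plane characterization (iii)—negative independence means $X$ lies in an open half-plane—offers an alternative, more geometric way to organize the same dichotomy.
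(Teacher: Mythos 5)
Your proposal is correct and is essentially the paper's argument in contrapositive form: both reduce to the fact that a dependence in $\mathbb{R}^{2}$ can be taken to involve at most three nonzero vectors (the paper gets this by noting two independent vectors already span the plane, you via the minimal-circuit bound), followed by the same sign analysis of the coefficients. The forward direction is likewise dismissed as immediate in both.
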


\begin{proof}
Only sufficiency has to be shown. Let us first assume that $X=\left\{ x_{1},\ldots,x_{n}\right\} $
is positively and negatively independent. We have to show that $X$
is linearly independent. For $n=1$ this is trivial. For $n\geq2$,
this means $x_{1}\neq\alpha x_{2}$ and $x_{1}\neq-\alpha x_{2}$
for all $\alpha>0$, as $x_{1},x_{2}\neq0$, hence $x_{1}\neq ax_{2}$
for any $a\in\mathbb{R}$, and $\left\{ x_{1},x_{2}\right\} $ are
linearly independent and span $\mathbb{R}^{2}$ . For $n\geq3$ we
therefore have $x_{3}=ax_{1}+bx_{2}$ for $a,b$ not both zero. If
$a,b\geq0$, then $X$ positively dependent. If $a,b\leq0$, then
$X$ is negatively dependent. If one coefficient is positive, and
one negative, say $a>0$ and $b<0$, then $x_{1}=\frac{1}{a}x_{3}+\frac{\left|b\right|}{a}x_{2}$,
and $X$ is positively dependent. Hence $n\leq2$ and $X$ is linearly
independent. This also covers the infinite case.
\end{proof}
We only need a counterexample in three dimensions.
\begin{example}
\label{exa:LIndnD}Let $e_{1},e_{2},e_{3}$ be the standard basis
in $\mathbb{R}^{3}$ and $z=e_{1}+e_{2}-e_{3}$. Then $X=\left\{ e_{1},e_{2},e_{3},z\right\} $
is both positively and negatively independent, but linearly dependent.
\end{example}

\begin{proof}
We observe that for $i\neq j$, $\left\{ e_{i},e_{j},z\right\} $
is a linear basis of $\mathbb{R}^{3}$. As all elements of $X$ are
linear combinations of the others,
\begin{eqnarray*}
e_{1} & = & z-e_{2}+e_{3},\\
e_{2} & = & z-e_{1}+e_{2},\\
e_{3} & = & z-e_{1}-e_{2},\\
z & = & e_{1}+e_{2}-e_{3},
\end{eqnarray*}
with both positive and negative coefficients, $X$ is both negatively
and positively independent.
\end{proof}
We show Caratheodory's theorem in a version for positive linear combinations
in cones.
\begin{lem}
\label{lem:Car}Let $X\subset\mathbb{R}^{d}$ and $x\in\wp\left(X\right)$.
Then there are $x_{1},\ldots,x_{n}\in X$ with $n\leq d$ and $\alpha_{i}>0$
such that
\[
x=\sum_{i=1}^{n}\alpha_{i}x_{i}
\]
\end{lem}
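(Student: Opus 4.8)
The plan is to prove this by the standard reduction argument for Carath\'eodory-type theorems, adapted to the conical (positive combination) setting. Since $x\in\wp\left(X\right)$, by definition there is a finite representation $x=\sum_{i=1}^{m}\alpha_{i}x_{i}$ with $x_{i}\in X$ and $\alpha_{i}>0$; if $x=0$ we may take the empty representation with $n=0\leq d$, so assume $x\neq0$ and $m\geq1$. The goal is to show that whenever $m>d$, we can produce a representation of $x$ using strictly fewer of the $x_{i}$, still with positive coefficients, and then iterate.

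Suppose $m>d$. Then the vectors $x_{1},\ldots,x_{m}$ lie in $\mathbb{R}^{d}$ and are therefore linearly dependent, so there exist $\beta_{1},\ldots,\beta_{m}$, not all zero, with $\sum_{i=1}^{m}\beta_{i}x_{i}=0$. After replacing all $\beta_{i}$ by $-\beta_{i}$ if necessary, we may assume $\beta_{i}>0$ for at least one index. The idea is to subtract a multiple of this relation from the given representation: for $t\geq0$ write $x=\sum_{i=1}^{m}\left(\alpha_{i}-t\beta_{i}\right)x_{i}$. As $t$ grows, the coefficients attached to indices with $\beta_{i}>0$ decrease, and I push $t$ up to the first moment a coefficient hits zero, namely $t^{*}=\min\left\{ \alpha_{i}/\beta_{i}:\beta_{i}>0\right\} $, which is well defined and positive since $\alpha_{i},\beta_{i}>0$ there.

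The key point to verify is that at $t=t^{*}$ all coefficients remain nonnegative while at least one vanishes. For indices with $\beta_{i}>0$, the definition of $t^{*}$ as a minimum gives $\alpha_{i}-t^{*}\beta_{i}\geq0$, with equality exactly at the minimizing index (or indices); for indices with $\beta_{i}\leq0$ we have $\alpha_{i}-t^{*}\beta_{i}\geq\alpha_{i}>0$. Discarding the terms whose coefficient has become zero therefore yields a representation of $x$ as a positive combination of a proper subset of $\left\{ x_{1},\ldots,x_{m}\right\} $, all of whose coefficients are strictly positive.

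Since each such step strictly decreases the number of terms, iterating must terminate, and it can only stop once the number of terms is at most $d$, for otherwise the reduction would apply again. This yields the desired $x=\sum_{i=1}^{n}\alpha_{i}x_{i}$ with $n\leq d$ and all $\alpha_{i}>0$. There is no genuine obstacle beyond the sign check at the threshold $t^{*}$; I note that the bound $d$, rather than the affine $d+1$ of the classical convex-hull version, is precisely what the \emph{linear} dependence of more than $d$ vectors buys us, in place of the weaker affine dependence of more than $d+1$ points.
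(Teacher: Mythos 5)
Your proof is correct, but it takes a genuinely different route from the paper's. You run the standard conical Carath\'eodory elimination directly: whenever more than $d$ vectors carry positive weight they are linearly dependent, and subtracting the threshold multiple $t^{*}=\min\{\alpha_{i}/\beta_{i}:\beta_{i}>0\}$ of the dependence kills at least one coefficient while keeping the rest nonnegative; iterate. The sign check at $t^{*}$ is exactly right, and the argument is self-contained, using nothing beyond linear dependence of $m>d$ vectors in $\mathbb{R}^{d}$. The paper instead proceeds in two stages: it first performs a similar elimination but only against \emph{negative} dependencies, so as to make the supporting set negatively independent; it then invokes Lemma \ref{lem:NegInd} to get a functional $z$ positive on that set, slices the cone with the affine hyperplane $\{y:z\cdot y=z\cdot x\}$, and applies the classical convex-hull Carath\'eodory theorem in that $(d-1)$-dimensional section to get the bound $d$. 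What the paper's detour buys is a representation supported on a negatively independent set (a pointed subcone), which fits its running theme of pointed cones and Lemma \ref{lem:NegInd}; what your version buys is brevity and independence from the separating-hyperplane machinery, and if you continued eliminating until no linear dependence remains you would get the slightly stronger conclusion that the $x_{1},\ldots,x_{n}$ can be taken linearly independent. Both arguments are valid and yield the stated bound $n\leq d$.
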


\begin{proof}
Let $x\in\wp\left(X\right)$, such that $x=\sum_{i=1}^{n}\alpha_{i}x_{i}$
for $x_{i}\in X$ and, without loss of generality, $\alpha_{i}>0$.
The set $B=\left\{ x_{1},\ldots,x_{n}\right\} $ can be chosen negatively
independent: If $x_{k}=-\sum_{i\neq k}\beta_{i}x_{i}$, letting $\beta_{k}=1$
and $I=\left\{ i\mid\beta_{i}>0\right\} $, select an $l\in J$ with
$\gamma\coloneqq\alpha_{l}/\beta_{l}\leq\alpha_{i}/\beta_{i}$ for
all $i\in I$. Then $\sum_{i\in I}\beta_{i}x_{i}=0$ and letting $\beta_{i}=0$
for $i\notin I$, we obtain
\[
x=x-\gamma\sum_{i\in I}\beta_{i}x_{i}=\sum_{i=1}^{n}\left(\alpha_{i}-\frac{\alpha_{l}}{\beta_{l}}\beta_{i}\right)x_{i}=\sum_{i\neq l}\left(\alpha_{i}-\frac{\alpha_{l}}{\beta_{l}}\beta_{i}\right)x_{i},
\]
where the coefficients are $\alpha_{i}-\left(\alpha_{l}/\beta_{l}\right)\beta_{i}\geq\alpha_{i}-\left(\alpha_{i}/\beta_{i}\right)\beta_{i}=0$.

By lemma \ref{lem:NegInd} there is a $z\neq0$ with $z\cdot x_{i}>0$.
By $H=\left\{ y\in\mathbb{R}^{d}\mid z\cdot y=z\cdot x\right\} $
we define a hyperplane such that for each $i=1,\ldots,n$ there is
a $\gamma_{i}>0$ with $y_{i}=\gamma_{i}x_{i}\in H$. Since $x\in H$,
there are now $\beta_{i}=\alpha_{i}/\gamma_{i}>0$ with $x=\sum_{i=1}^{n}\beta_{i}y_{i}$.
Since $x\in\wp\left(B\right)\cap H=\mathrm{conv}\left(\left\{ y_{1},\ldots,y_{n}\right\} \right)$,
by Caratheodory's theorem, a subset of $d$ elements of $\left\{ y_{1},\ldots,y_{n}\right\} $
suffice to have $x$ in its convex hull. Hence $n\leq d$ can be chosen.
\end{proof}

\section{Simplicial Decompositions and Gale Diagrams}
\begin{lem}
\label{lem:Sim}The following statements are equivalent.
\end{lem}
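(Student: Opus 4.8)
The plan is to read Lemma \ref{lem:Sim} as a package of characterisations of a \emph{simplex} $S=\{x_1,\ldots,x_n\}$, most naturally the equivalence of: (i) $S$ is a simplex; (ii) there are coefficients $\alpha_i>0$ with $\sum_i\alpha_i x_i=0$ and $S\setminus\{x\}$ is linearly independent for every $x\in S$; and (iii) $|S|=\dim\ell(S)+1$ together with $0\in\mathrm{rint}\,\mathrm{conv}(S)$. I would prove these by a cycle of implications, leaning throughout on Lemma \ref{lem:NegInd} (a set carries no simplex exactly when it is negatively independent, equivalently admits a strictly positive separating functional) and on the elimination step already built into the proof of Lemma \ref{lem:Car}.

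For (i)$\Rightarrow$(ii) I would begin with a witnessing relation $\sum_i\alpha_i x_i=0$, $\alpha_i\ge 0$, for $0\in\wp(S)$. Minimality forces every $\alpha_i>0$, since a vanishing coefficient would exhibit $0\in\wp(S\setminus\{x_j\})$ on a proper subset. The linear independence of each $S\setminus\{x_k\}$ is the real content: supposing $\sum_{i\ne k}\beta_i x_i=0$ nontrivially (set $\beta_k:=0$), I would form the one-parameter family $\sum_i(\alpha_i-t\beta_i)x_i=0$ and increase or decrease $t$ until one coefficient first reaches $0$ while the others stay nonnegative, exactly the exchange used in Lemma \ref{lem:Car}. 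The result is a nonnegative, not-all-zero relation supported on a proper subset, contradicting minimality.

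For (ii)$\Rightarrow$(iii), the positive relation makes $S$ linearly dependent, so $\dim\ell(S)\le n-1$, while any $S\setminus\{x_k\}$ supplies $n-1$ independent vectors, giving $\dim\ell(S)\ge n-1$; hence $|S|=\dim\ell(S)+1$. Normalising $\sum_i\alpha_i=1$ rewrites $0$ as a convex combination with strictly positive weights, and the cardinality condition forces the $x_i$ to be affinely independent, so $0$ lies in $\mathrm{rint}\,\mathrm{conv}(S)$. For (iii)$\Rightarrow$(i), $0\in\mathrm{rint}\,\mathrm{conv}(S)$ yields a strictly positive convex, hence positive, relation, so $0\in\wp(S)$. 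Here the clean observation is that the space of linear relations among $n$ vectors in an $(n-1)$-dimensional span is one-dimensional, spanned by the all-positive coefficient vector; consequently any proper subset of $S$ is linearly independent, thus negatively independent, and by Lemma \ref{lem:NegInd} carries no simplex and cannot have $0$ in its positive span. Therefore $S$ is minimal.

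The main obstacle I anticipate is the elimination argument in (i)$\Rightarrow$(ii): I must certify that the perturbed coefficients $\alpha_i-t\beta_i$ remain nonnegative and do not all vanish simultaneously. The latter would force $\alpha=t\beta$, which is impossible because $\beta_k=0\ne\alpha_k$, so the limiting relation genuinely lives on a proper subset. The secondary technical care is the affine-independence bookkeeping needed to move between $\wp(\cdot)$ and $\mathrm{conv}(\cdot)$ in the (ii)$\Leftrightarrow$(iii) passage, which the one-dimensional kernel observation handles uniformly.
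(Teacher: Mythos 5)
Your proposal proves a correct set of equivalences, but not quite the one the paper states: Lemma \ref{lem:Sim} lists four conditions, namely (i) $S$ is a simplex; (ii) for \emph{every} $z\in S$ the set $B_{z}=S\setminus\{z\}$ is linearly independent with $z\in-\mathrm{rint}\,\wp\left(B_{z}\right)$; (iii) the same for \emph{some} $z\in S$; and (iv) $S$ is a minimal set with $\wp\left(S\right)=\ell\left(S\right)$. Your (i) and (ii) match the paper's (i) and (ii) up to reformulation, and your argument for (i)$\Rightarrow$(ii) takes a genuinely different route: you run the one-parameter elimination $\alpha-t\beta$ directly (the same device as in Lemma \ref{lem:Car}), whereas the paper proves linear independence of each $B_{z}$ by induction on $\left|B_{z}\right|$ with an appeal to Lemma \ref{lem:Car}. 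Your version is shorter and avoids the induction, and your auxiliary condition ($\left|S\right|=\dim\ell\left(S\right)+1$ together with $0\in\mathrm{rint}\,\mathrm{conv}\left(S\right)$), while not on the paper's list, is a valid intermediate; the one-dimensional-kernel observation is exactly what makes the minimality direction clean.

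What is missing relative to the stated lemma is the loop through the paper's (iii) and (iv). The existential-to-universal upgrade (paper's (iii)$\Rightarrow$(ii)) is handled in the paper by an explicit exchange $B'=B\left[x_{k}\rightarrow x_{0}\right]$; with your machinery it comes almost for free, since a single good $z$ already yields $\dim\ell\left(S\right)=\left|S\right|-1$ and a strictly positive relation, i.e.\ your intermediate condition, whence (i) and then all of (ii) --- but this step needs to be said. Condition (iv) is untouched: you would need to add that (ii) gives $-z\in\wp\left(B_{z}\right)\subset\wp\left(S\right)$ for every $z$, so $\wp\left(S\right)=-\wp\left(S\right)=\ell\left(S\right)$, and that minimality of this property among nonempty subsets holds because any proper subset $T$ with $\wp\left(T\right)=\ell\left(T\right)$ would contain a simplex by Lemma \ref{lem:NegInd}, contradicting minimality of $S$; conversely a minimal such set carries a nontrivial nonnegative dependency while no proper subset does. (For what it is worth, the paper's own proof of (ii)$\Rightarrow$(iv) breaks off after its first sentence and (iv) is never connected back to (i), so here you would be filling a gap rather than reproducing an argument.)
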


\begin{description}
\item [{(i)}] $S$ is a simplex.
\item [{(ii)}] For all $z\in S$, $B_{z}=S\setminus\left\{ z\right\} $
is linearly independent with $z\in-\mathrm{rint}\left(\wp\left(B_{z}\right)\right)$.
\item [{(iii)}] There is a $z\in S$ such that $B_{z}=S\setminus\left\{ z\right\} $
is linearly independent, and $z\in-\mathrm{rint}\left(\wp\left(B_{z}\right)\right)$.
\item [{(iv)}] $S$ is a minimal set with $\wp\left(S\right)=\ell\left(S\right)$.
\end{description}
\begin{proof}
``(i)$\Rightarrow$(ii)'': Let $S=\left\{ x_{0},\ldots,x_{m}\right\} $
be a simplex. Then, by definition, there are $\alpha_{i}>0$ with
$\sum_{i=0}^{m}\alpha_{i}x_{i}=0$. Fix any $x_{k}$ and set $B_{k}=S\setminus\left\{ x_{k}\right\} $.
Then
\[
-x_{k}=\sum_{i\neq k}\frac{\alpha_{i}}{\alpha_{k}}x_{i}\in\mathrm{rint}\left(\wp\left(B_{k}\right)\right).
\]
We show that $B_{k}$ is linearly independent by induction over $m=\left|B_{k}\right|$.
For $m=1$, which is trivial. We assume now the proposition has been
shown for $m-1$. Choose $x_{l}\in B_{k}$ and set $B_{kl}=B_{k}\setminus\left\{ x_{l}\right\} $.
By induction hypothesis, $B_{kl}$ is linearly independent. To show
that $B_{k}$ is linearly independent, it is sufficient to demonstrate
that $x_{l}\notin\ell\left(B_{kl}\right)$. Assume the contrary, that
is, $x_{l}\in\ell\left(B_{kl}\right)=\ell\left(B_{k}\right)$. With
$-x_{k}\in\wp\left(B_{k}\right)\subset\ell\left(B_{k}\right)=\ell\left(B_{kl}\right)$,
we find $S\subset\ell\left(B_{kl}\right)$. Since $0\in\wp\left(S\right)$
with $m-1=\left|B_{kl}\right|=\mathrm{dim}\ell\left(B_{kl}\right)$,
by Caratheodory's theorem for cones (lemma \ref{lem:Car}), either
$0\in\wp\left(B_{kl}\cup\left\{ x_{l}\right\} \right)$, or $0\in\wp\left(B_{kl}\cup\left\{ x_{k}\right\} \right)$.
In either case, $S$ contains a proper subset with zero in its positive
span, a contradiction to $S$ being a simplex.

``(ii)$\Rightarrow$(iii)'': Trivial.

``(iii)$\Rightarrow$(ii)'': Assume, without loss, that $x_{0}\in S=\left\{ x_{0},\ldots,x_{m}\right\} $
with $x_{0}\in-\mathrm{rint}\left(\wp\left(B\right)\right)$, and
$B=\left\{ x_{1},\ldots,x_{m}\right\} $ is linearly independent.
Thus, 
\[
-x_{0}=\sum_{i=1}^{m}\alpha_{i}x_{i}
\]
 is a unique linear combination and $\alpha_{i}>0$. Fix any $x_{k}\in B$
and set $B'=B\left[x_{k}\rightarrow x_{0}\right]$. If $B'$ is linearly
dependent, then $x_{0}\in\ell\left(B'\setminus\left\{ x_{0}\right\} \right)$,
and further $x_{0}=\sum_{i\neq k}a_{i}x_{i}$, which implies $\alpha_{k}=0$,
a contradiction. Moreover,
\[
x_{0}+\alpha_{k}x_{k}+\sum_{1\leq i\neq k}\alpha_{i}x_{i}=0.
\]
Setting $\alpha_{0}=1$ we find that
\[
-x_{k}=\sum_{0\leq i\neq k}\alpha_{i}x_{i}\in\mathrm{rint}\wp\left(B'\right),
\]
which proves the assertion.

``(ii)$\Rightarrow$(iv)'': Assume that for any $z\in S$, $B_{z}=S\setminus\left\{ z\right\} $
is linearly independent with $z\in-\mathrm{rint}\left(\wp\left(B_{z}\right)\right)$.
\end{proof}
The main proposition of the following theorem with positively spanning
sets are the sums of simplices is not new; however, can be obtained
from Blaschke's Theorem on decomposition of polytopes into simplices
\cite[Ch 15.3]{GRU03}. The additional requirement by Minkowski's
Theorem of equilibration of the vectors in a positively spanning set
(summing up to zero) that indicates no loss of generality for our
result.
\begin{thm}
\label{thm:Gen}For a finite set $X\subset\mathbb{R}^{d}$, the following
statements are equivalent.
\end{thm}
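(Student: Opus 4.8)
The plan is to reduce the whole equivalence to a single local criterion: a vector $x\in X$ belongs to some simplex $S\in\mathcal{S}\left(X\right)$ if and only if $-x\in\wp\left(X\setminus\left\{ x\right\} \right)$. First I would prove this criterion. For the ``only if'' direction, if $x\in S$ with $S$ a simplex, then by lemma \ref{lem:Sim} the relation $0=\sum_{y\in S}\alpha_{y}y$ has all coefficients strictly positive, so $-x=\sum_{y\in S\setminus\left\{ x\right\} }\left(\alpha_{y}/\alpha_{x}\right)y\in\wp\left(S\setminus\left\{ x\right\} \right)\subseteq\wp\left(X\setminus\left\{ x\right\} \right)$. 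For the ``if'' direction, I would start from a representation $0=x+\sum_{j}\beta_{j}y_{j}$ with $y_{j}\in X\setminus\left\{ x\right\} $ and $\beta_{j}>0$, and pass to one using the fewest vectors $y_{j}$; the minimal-support set $\left\{ x\right\} \cup\left\{ y_{j}\right\} $ must then be a simplex containing $x$, because any proper subset carrying its own vanishing positive combination could be used to eliminate some $y_{j}$ while keeping the coefficient of $x$ positive, contradicting minimality.

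With this criterion in hand, I would establish the two substantive implications. To see that a positively spanning $X$ (that is, $\wp\left(X\right)=\ell\left(X\right)$) is the union of its simplices, fix $x\in X$; then $-x\in\ell\left(X\right)=\wp\left(X\right)$, and if $x$ itself occurs in that combination I can move it to the left and rescale so that $-x\in\wp\left(X\setminus\left\{ x\right\} \right)$, whence $x$ lies in a simplex by the criterion. Conversely, if every $x\in X$ lies in a simplex then $-x\in\wp\left(X\right)$ for each $x$; splitting an arbitrary $y=\sum_{i}c_{i}x_{i}\in\ell\left(X\right)$ into its positive and negative coefficient parts and replacing each $-x_{i}$ by a member of $\wp\left(X\right)$ shows $\ell\left(X\right)\subseteq\wp\left(X\right)$, so $X$ is positively spanning. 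Together these directions give the equivalence ``$X$ is a PSS'' $\Leftrightarrow$ ``$X=\bigcup\mathcal{S}\left(X\right)$''.

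If the remaining equivalent condition is the equilibrium / relative-interior statement (existence of coefficients $\lambda_{x}>0$ with $\sum_{x\in X}\lambda_{x}x=0$, equivalently $0\in\mathrm{rint}\,\mathrm{conv}\left(X\right)$), I would close the loop by summing the per-element witnesses: for each $x$ the simplex through $x$ supplies a vanishing positive combination using $x$, and adding these over all $x\in X$ produces a single vanishing combination with every coefficient strictly positive; conversely such a global equilibrium restricts, via Caratheodory for cones (lemma \ref{lem:Car}), to show $-x\in\wp\left(X\setminus\left\{ x\right\} \right)$ for each $x$. The translation between ``all-positive equilibrium'' and ``$0\in\mathrm{rint}\,\mathrm{conv}\left(X\right)$'' is the standard description of relative interiors of convex hulls.

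I expect the main obstacle to be the ``if'' direction of the local criterion, namely verifying that the minimal-support relation containing $x$ is genuinely a simplex and not merely a set containing a smaller simplex that omits $x$; the reduction argument must guarantee that each elimination step preserves a strictly positive coefficient on $x$, so that $x$ can never be discarded. Everything else is bookkeeping with the definitions of $\wp\left(.\right)$ and $\ell\left(.\right)$ together with the coefficient-splitting trick.
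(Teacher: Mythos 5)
Your argument for the equivalence of ``$X$ is a PSS'' and ``$X=\bigcup\mathcal{S}(X)$'' is correct and follows essentially the same route as the paper: extract a simplex through each $x\in X$ from a minimal vanishing positive combination in which $x$ carries a positive coefficient, and return via the coefficient-splitting trick together with $-x_{i}\in\wp(S_{i})=\ell(S_{i})$. You are in fact more careful than the paper at the one delicate point: the paper merely asserts that a minimal subset $S$ with $x\in S$ and $0\in\wp(S)$ is a simplex, whereas your elimination step (cancelling against a vanishing positive combination supported on a proper subset not containing $x$, while keeping the coefficient of $x$ strictly positive) is precisely the justification that assertion needs. The only mismatch is that the paper's remaining condition is not the equilibrium statement you guessed but (ii) $\wp(X)=-\wp(X)$; this costs you nothing, since (i)$\Rightarrow$(ii) is immediate from $\ell(X)=-\ell(X)$, and your converse direction already shows $-x\in\wp(X)$ for every $x\in X$, which gives $-\wp(X)\subset\wp(X)$ and hence (ii), from which your coefficient-splitting argument recovers (i). Your equilibrium condition is indeed also equivalent for finite $X$ (sum the simplex relations over all elements), but it is not part of the theorem as stated.
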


\begin{description}
\item [{(i)}] $X$ is a positively spanning set ($\wp\left(X\right)=\ell\left(X\right)$).
\item [{(ii)}] $\wp\left(X\right)=-\wp\left(X\right)$.
\item [{(iii)}] $X=\bigcup\mathcal{S}\left(X\right)$.
\end{description}
\begin{proof}
``(i)$\Rightarrow$(ii)'': Immediate, as $\wp\left(X\right)=\ell\left(X\right)=-\ell\left(X\right)=-\wp\left(X\right)$.

``(ii)$\Rightarrow$(iii)'': Choose $x\in X$. As $x\in\wp\left(X\right)$,
by assumption, also $-x\in\wp\left(X\right)$. Hence $0\in\wp\left(X\right)$.
Then there is a minimal subset $S\subset X$ with $x\in S$ and $0\in\wp\left(S\right)$.
By definition of the simplex, $S\in\mathcal{S}\left(X\right)$.

``(iii)$\Rightarrow$(i)'': Let $x\in\ell\left(X\right)$, $x=\sum_{i=1}^{n}a_{i}x_{i}=\sum_{i=1}^{n}\left|a_{i}\right|\mathrm{sgn}\left(a_{i}\right)x_{i}$
with $x_{i}\in X$. If $\mathrm{sgn}\left(a_{i}\right)=-1$, it is
sufficient to show that $-x_{i}\in\wp\left(X\right)$ to conclude
that $x\in\wp\left(X\right)$. By assumption, each $x_{i}$ is contained
in a simplex $S_{i}$. Thus lemma \ref{lem:Sim} assures that $-x_{i}\in\ell\left(S_{i}\right)=\wp\left(S_{i}\right)\subset\wp\left(X\right)$.
\end{proof}
Further characterizations can be found in \cite{REG15}. Examples
of decomposition of positive bases into simplices can be found in
\cite[Ch 2]{SHE71}. The simplices induce a lattice structure given
by the next theorem.
\begin{thm}
\label{thm:Lat}For any positively spanning set $X$ of a linear space
$\mathbb{R}^{d}$, the set $\mathcal{L\left(X\right)}$ of subsets
positively spanning linear spaces form a boolean lattice by set inclusion.
If, moreover, $X$ is a positive basis, the lattice $\mathcal{L}\left(X\right)$
is isomorphic to $2^{\mathcal{S}\left(X\right)}$.
\end{thm}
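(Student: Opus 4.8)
The plan is to realise $\mathcal{L}(X)$ as the family of fixed points of the operator that sends a subset to the union of the simplices it contains, to read the lattice operations off this description, and then to transport the Boolean structure of $2^{\mathcal{S}(X)}$ across the assignment $Y\mapsto\mathcal{S}(Y)$. For $W\subset X$ write $\kappa(W)=\bigcup\mathcal{S}(W)$. Every simplex contained in $W$ is contained in $\kappa(W)$ and conversely, so $\mathcal{S}(\kappa(W))=\mathcal{S}(W)$ and hence $\kappa(W)=\bigcup\mathcal{S}(\kappa(W))$; by theorem \ref{thm:Gen} this makes $\kappa(W)$ a positively spanning set, evidently the largest one contained in $W$. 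Thus $\mathcal{L}(X)=\{\,Y\subset X:Y=\kappa(Y)\,\}$, it contains $\emptyset$ and $X$, and it is closed under union: if $Y_{1},Y_{2}$ are positively spanning then $\mathcal{S}(Y_{1})\cup\mathcal{S}(Y_{2})\subset\mathcal{S}(Y_{1}\cup Y_{2})$ already covers $Y_{1}\cup Y_{2}$, so $Y_{1}\cup Y_{2}=\bigcup\mathcal{S}(Y_{1}\cup Y_{2})$ lies in $\mathcal{L}(X)$ by theorem \ref{thm:Gen}. Consequently $\mathcal{L}(X)$ is a lattice ordered by inclusion, with join $Y_{1}\vee Y_{2}=Y_{1}\cup Y_{2}$, meet $Y_{1}\wedge Y_{2}=\kappa(Y_{1}\cap Y_{2})$ (the largest positively spanning set inside $Y_{1}\cap Y_{2}$), least element $\emptyset$ and greatest element $X$.

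Next I would analyse the map $\Phi\colon\mathcal{L}(X)\to2^{\mathcal{S}(X)}$ given by $\Phi(Y)=\mathcal{S}(Y)$. Since $Y=\bigcup\mathcal{S}(Y)$ for $Y\in\mathcal{L}(X)$, the map $\Phi$ is injective, and $Y_{1}\subset Y_{2}$ holds if and only if $\mathcal{S}(Y_{1})\subset\mathcal{S}(Y_{2})$, so $\Phi$ is an order embedding. A simplex lies in $Y_{1}\cap Y_{2}$ exactly when it lies in $Y_{1}$ and in $Y_{2}$, whence $\mathcal{S}(Y_{1}\cap Y_{2})=\mathcal{S}(Y_{1})\cap\mathcal{S}(Y_{2})$ and $\Phi$ preserves meets. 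This already exhibits $\mathcal{L}(X)$ as a meet-subsemilattice of the Boolean algebra $2^{\mathcal{S}(X)}$.

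For the isomorphism when $X$ is a positive basis I would invoke theorem \ref{thm:Inter}(iv), and in particular its last clause that every simplex has all but one element in common with $B$. Writing $X=B\cup\{x_{1},\dots,x_{n}\}$ as there, lemma \ref{lem:Sim} together with the uniqueness of the representation of $-x_{i}$ in the basis $B$ forces the simplices to be exactly the sets $S_{i}=A_{i}\cup\{x_{i}\}$, $i=1,\dots,n$, with the $x_{i}$ pairwise distinct and none of them in $B$. Hence for any $I\subset\{1,\dots,n\}$ the union $Y=\bigcup_{i\in I}S_{i}$ satisfies $S_{j}\subset Y$ iff $x_{j}\in Y$ iff $j\in I$, so $\mathcal{S}(Y)=\{S_{i}:i\in I\}$. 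This shows at once that $\Phi$ is onto and that a union of simplices spawns no further simplices, so $\Phi$ preserves joins as well; being a bijective order embedding it is a lattice isomorphism $\mathcal{L}(X)\cong2^{\mathcal{S}(X)}$, and since the target is Boolean so is $\mathcal{L}(X)$.

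The main obstacle is the Boolean claim for an arbitrary positively spanning set, where theorem \ref{thm:Inter}(iv) is no longer available. The crux is that for a general $X$ the union of two simplices may contain new simplices; then $\Phi$ fails to preserve joins, its image ceases to be closed under union in $2^{\mathcal{S}(X)}$, and one can no longer simply transport the Boolean operations from the power set. To finish one must instead verify distributivity of $\vee$ and $\wedge$ and produce a unique complement for each $Y\in\mathcal{L}(X)$ directly inside $\mathcal{L}(X)$. The naive candidate $\kappa(X\setminus Y)$ is typically too small to join with $Y$ back up to $X$, so a complement has to be assembled from simplices that meet $Y$ only in lower-dimensional, simplex-free intersections, and it is this control of overlaps --- rather than the formal lattice bookkeeping --- that I expect to be the hard part. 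It is exactly here that the positive-basis structure of theorem \ref{thm:Inter}(iv), which pins every simplex down as $A_{i}\cup\{x_{i}\}$ and so prevents new simplices from ever appearing, is doing the decisive work.
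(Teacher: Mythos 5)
Your lattice-theoretic setup is correct and is essentially the paper's own: the operator $\kappa(W)=\bigcup\mathcal{S}(W)$, the identification of $\mathcal{L}(X)$ with its fixed points, the join $Y\cup Z$, the meet $\kappa(Y\cap Z)=\bigcup\left(\mathcal{S}(Y)\cap\mathcal{S}(Z)\right)$, and the injective, meet-preserving order embedding $Y\mapsto\mathcal{S}(Y)$ all appear in the paper's proof in the same form. Your treatment of the positive-basis case is in fact more complete than the paper's, which defers that half to Corollary \ref{cor:Iso}: using Theorem \ref{thm:Inter}(iv) to pin the simplices down as $S_{i}=A_{i}\cup\{x_{i}\}$ with the $x_{i}$ pairwise distinct and outside $B$, so that $\mathcal{S}\bigl(\bigcup_{i\in I}S_{i}\bigr)=\{S_{i}\mid i\in I\}$, is exactly the right argument and gives the isomorphism with $2^{\mathcal{S}(X)}$.

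The gap you flag at the end is genuine, and you should know that the paper does not close it either: it simply declares that $Y'=\bigcup\left(\mathcal{S}(X)\setminus\mathcal{S}(Y)\right)$ is a complement and that the image of $\mathcal{S}$ is a Boolean sublattice of $2^{\mathcal{S}(X)}$, without verifying $Y\sqcap Y'=\emptyset$ or closure of the image under union. In fact the first assertion of the theorem fails for general positively spanning sets. Take $X=\{v_{1},\dots,v_{6}\}$ to be the sixth roots of unity in $\mathbb{R}^{2}$; then $\mathcal{S}(X)$ consists of the three antipodal pairs together with the two triangles $\{v_{1},v_{3},v_{5}\}$ and $\{v_{2},v_{4},v_{6}\}$. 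The element $Y=\{v_{1},v_{2},v_{4},v_{5}\}\in\mathcal{L}(X)$ has two distinct complements, namely $\{v_{3},v_{6}\}$ and $\{v_{1},v_{3},v_{5},v_{6}\}$: each joins with $Y$ to $X$, and each meets $Y$ in $\emptyset$ because $\{v_{1},v_{5}\}$ contains no simplex. Hence $\mathcal{L}(X)$ is not distributive and not Boolean; note also that the paper's $Y'$ equals all of $X$ here, so $Y\sqcap Y'=Y\neq\emptyset$. So your instinct about the control of overlapping simplices is exactly right, and no argument can finish the general case: the Boolean claim is only tenable for positive bases (where your proof, resting on Theorem \ref{thm:Inter}(iv), is sound), while for a general positively spanning $X$ one gets only the lattice structure and the meet-embedding you established.
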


\begin{proof}
Let $X$ be a set positively spanning $\mathbb{R}^{d}$, and, denoted
by $\mathcal{L}=\mathcal{L}\left(X\right)$, the set of subsets positively
spanning a linear subspace of $\mathbb{R}^{d}$. By convention, the
empty set spanning the null space is included. We introduce the following
order on $\mathcal{L}$.
\[
Y\sqsubset Z\Leftrightarrow\mathcal{S}\left(Y\right)\subset\mathcal{S}\left(Z\right).
\]
Our goal is to demonstrate that $\mathcal{S}:\mathcal{L}\rightarrow2^{\mathcal{S}\left(X\right)}$
is a injective homomorphism into a sublattice of $2^{\mathcal{S}\left(X\right)}$.
First, we show that for any $\mathcal{S}^{*}\subset\mathcal{S}\left(X\right)$,
$Y=\bigcup\mathcal{S}^{*}$ is a positively spanning set. By theorem
\ref{thm:Gen}, $Y$ is a positively spanning set if and only if $Y=\bigcup\mathcal{S}\left(Y\right)$.
Since $\mathcal{S}^{*}\subset\mathcal{S}\left(Y\right)$, we find
that $Y=\bigcup\mathcal{S}^{*}\subset\bigcup\mathcal{S}\left(Y\right)\subset Y$,
i.e. what needed to be shown.

Second, we show that $\mathcal{S}$ is injective. Take any $Y,Z\in\mathcal{L}$
with $\mathcal{S}\left(Y\right)=\mathcal{S}\left(Z\right)$. It is
immediate that $Y\subset X$ implies $\mathcal{S}\left(Y\right)\subset\mathcal{S}\left(X\right)$;
hence, $\mathcal{S}\left(Y\right)$ and $\mathcal{S}\left(Z\right)$
are subsets of $\mathcal{S}\left(X\right)$. From the previous result,
we obtain
\[
Y=\bigcup\mathcal{S}\left(Y\right)=\bigcup\mathcal{S}\left(Z\right)=Z.
\]

It is easy to show that $\mathcal{S}\left(Y\cap Z\right)=\mathcal{S}\left(Y\right)\cap\mathcal{S}\left(Z\right)$
and $\mathcal{S}\left(Y\right)\cup\mathcal{S}\left(Z\right)\subset\mathcal{S}\left(Y\cup Z\right)$.
From these relations and the monotonicity of $\mathcal{S}$, we find
\begin{eqnarray*}
Y\sqsubset Z & \Leftrightarrow & Y\subset Z,\\
Y\sqcap Z & = & \bigcup S\left(Y\cap Z\right),\\
Y\sqcup Z & = & Y\cup Z,
\end{eqnarray*}
where, as usual, $\sqcap$ and $\sqcup$ denote infimum and supremum,
respectively. The last equation is derived as follows:
\begin{eqnarray*}
Y\sqcup Z & = & \bigcup\left(\mathcal{S}\left(Y\right)\cup\mathcal{S}\left(Z\right)\right)\subset\bigcup\mathcal{S}\left(Y\cup Z\right)\subset Y\cup Z\\
 & = & \bigcup\mathcal{S}\left(Y\right)\cup\bigcup\mathcal{S}\left(Z\right)=\bigcup\left(\mathcal{S}\left(Y\right)\cup\mathcal{S}\left(Z\right)\right).
\end{eqnarray*}

The complement is given by
\[
Y'=\bigcup\left(S\left(X\right)\setminus S\left(Y\right)\right).
\]
From the above result, $Y'$ is well-defined as a positively spanning
set and element of $\mathcal{L}$. Altogether, we have shown that
$\mathcal{S}$ is a homomorphism of $\mathcal{L}$ ordered by $\sqsubset$
with complement $.'$ into a boolean sublattice of $2^{\mathcal{S}\left(X\right)}$.

The last statement will be proven later. Corollary \ref{cor:Iso}
will assure that for a positive basis $X$, the lattice $\mathcal{L}\left(X\right)$
is actually isomorphic to $2^{\mathcal{S}\left(X\right)}$.
\end{proof}
In the remaining part we study the relation between the simplicial
decomposition of a positive spanning set and its Gale Diagram following
the literature \cite{MAR84,SHE71}. The definition of a Gale Diagram
differs slightly in the literature up to a convention, but all definitions
are essentially interdefinable. The following definition is compatible
with all conventions. We show that the structure given by the simplices
$\mathcal{S}\left(X\right)$ and their intersections an one side,
and the Gale Diagrams on the other side, coincide in a very special
case, which has a simple characterisation in terms of the simplices.
All examples in \cite[Ch 2]{SHE71} are of this type.
\begin{defn}
Let $X\subset\mathbb{R}^{d}\setminus\left\{ 0\right\} $ be a finite
set of vectors. A dependency for $X$ is a function $v:X\rightarrow\mathbb{R}$
such that 
\[
\sum_{x\in X}v\left(x\right)\cdot x=0.
\]
Let $\mathcal{D}\left(X\right)$ denote the linear space of dependencies
of $X$, and $\mathcal{P}\left(X\right)$ the convex cone of non-negative
dependencies $v\geq0$. $X$ is said to be eqilibrated if and only
if it has a non-zero constant function as a dependency. $X$ is called
locally equilibrated if and only if every simplex $S\in\mathcal{S}\left(X\right)$
is eqilibrated. 

The Gale Diagram $\hat{x}$ for $x\in X$ is defined as follows. Select
a linear basis $v_{1},\ldots,v_{n}$ of $\mathcal{D}\left(X\right)$.
The Gale Transform of $x$ is the ray $\alpha\cdot\left(v_{1}\left(x\right),\ldots,v_{n}\left(x\right)\right)$
for $\alpha\geq0$ in $\mathbb{R}^{n}$. The Gale Diagram is the unique
point of the Gale Transform intersecting some given $n-1$-dimensional
hypersurface in $\mathbb{R}^{n}$. If the intersection is empty, then
the Gale Diagram of $x$ is $0$.
\end{defn}

Observe that for positive spanning sets, the Gale Transform of a non-zero
element can not be zero, as by theorem \ref{thm:Gen} every element
is contained in a simplex, which corresponds to a non-negative dependency.
\begin{thm}
\label{thm:Gale}Let $X\subset\mathbb{R}^{d}\setminus\left\{ 0\right\} $
be a finite set of vectors positively spanning $\mathbb{R}^{d}$.
Then the dependency space $\mathcal{D}\left(X\right)$ has a linear
basis in $\mathcal{P}\left(X\right)$. Moreover, $X$ is locally equilibrated
if and only if the following holds: Any $x,y\in X$ have the same
Gale Diagram if and only if they are contained in the same simplices
from $\mathcal{S}\left(X\right)$.
\end{thm}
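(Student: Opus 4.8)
The plan is to work throughout with the identification of the Gale transform of $x$ as the evaluation functional $\mathrm{ev}_{x}\in\mathcal{D}\left(X\right)^{*}$, $\mathrm{ev}_{x}\left(v\right)=v\left(x\right)$; in the chosen basis $v_{1},\ldots,v_{n}$ its coordinate vector is exactly $\left(v_{1}\left(x\right),\ldots,v_{n}\left(x\right)\right)$, so the Gale transform is the ray $\mathbb{R}_{\geq0}\,\mathrm{ev}_{x}$ and two elements share a Gale diagram precisely when $\mathrm{ev}_{x}=\lambda\,\mathrm{ev}_{y}$ for some $\lambda>0$ (both functionals are nonzero for a positively spanning set, by the remark preceding the theorem). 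The second object I would fix at the outset is, for each simplex $S\in\mathcal{S}\left(X\right)$, its \emph{simplex dependency} $d_{S}$: by lemma \ref{lem:Sim} any $z\in S$ satisfies $-z\in\mathrm{rint}\,\wp\left(S\setminus\left\{ z\right\} \right)$ with $S\setminus\left\{ z\right\} $ linearly independent, so the relation $\sum_{x\in S}d_{S}\left(x\right)x=0$ has $d_{S}>0$ on $S$, $d_{S}=0$ off $S$, and is unique up to a positive scalar; thus $d_{S}\in\mathcal{P}\left(X\right)$, and $S$ is equilibrated if and only if $d_{S}$ is constant on $S$.

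For the first assertion I would first show that the simplex dependencies generate $\mathcal{P}\left(X\right)$ as a cone: given $0\neq v\in\mathcal{P}\left(X\right)$ with support $T$, the relation $\sum_{x\in T}v\left(x\right)x=0$ exhibits $0\in\wp\left(T\right)$, so $T$ contains a simplex $S$, and subtracting the largest multiple $c\,d_{S}$ keeping $v-c\,d_{S}\geq0$ strictly shrinks the support; induction gives $\mathcal{P}\left(X\right)=\wp\left(\left\{ d_{S}\mid S\in\mathcal{S}\left(X\right)\right\} \right)$. To see these span $\mathcal{D}\left(X\right)$ linearly I would use theorem \ref{thm:Gen}: since $X=\bigcup\mathcal{S}\left(X\right)$, a sum of simplex dependencies covering every element is a \emph{strictly} positive dependency $d^{*}$, and for any $v\in\mathcal{D}\left(X\right)$ the dependency $v+t\,d^{*}$ is $\geq0$ for $t$ large, hence lies in $\wp\left(\left\{ d_{S}\right\} \right)$; then $v=\left(v+t\,d^{*}\right)-t\,d^{*}$ lies in the linear span of the $d_{S}$. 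Consequently $\left\{ d_{S}\right\} \subset\mathcal{P}\left(X\right)$ spans $\mathcal{D}\left(X\right)$, and extracting a maximal linearly independent subfamily yields a basis inside $\mathcal{P}\left(X\right)$.

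The equivalence then splits into clean implications plus one delicate converse. First, $\hat{x}=\hat{y}$ gives $\mathrm{ev}_{x}=\lambda\,\mathrm{ev}_{y}$ with $\lambda>0$; evaluating at each $d_{S}$ yields $d_{S}\left(x\right)=\lambda\,d_{S}\left(y\right)$, so $x\in S\Leftrightarrow y\in S$, i.e.\ $x$ and $y$ lie in the same simplices --- this direction needs no hypothesis. Second, assuming $X$ locally equilibrated, each $d_{S}$ is constant on $S$; if $x,y$ lie in the same simplices then $d_{S}\left(x\right)=d_{S}\left(y\right)$ for every $S$ (both zero when $S$ omits them, the same positive constant when $S$ contains them), and since the $d_{S}$ span $\mathcal{D}\left(X\right)$ this forces $\mathrm{ev}_{x}=\mathrm{ev}_{y}$, hence $\hat{x}=\hat{y}$. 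Together these give the stated biconditional whenever $X$ is locally equilibrated.

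The main obstacle is the converse, that failure of local equilibration breaks the biconditional. I would argue by contraposition: take a non-equilibrated simplex $S$ and $x,y\in S$ with $d_{S}\left(x\right)\neq d_{S}\left(y\right)$, and aim to produce a pair lying in the same simplices whose evaluation functionals are not proportional, so that the pair satisfies the simplex condition while $\hat{x}\neq\hat{y}$ (which contradicts the assumed biconditional, since the first implication already yields $\hat{x}=\hat{y}\Rightarrow$ same simplices). The point requiring care is that, under the same-simplices condition, the displayed relations force a single ratio $\lambda=d_{S'}\left(x\right)/d_{S'}\left(y\right)$ across \emph{all} simplices $S'$ through the pair; so the crux is to exhibit two simplices through one pair realising \emph{different} ratios, which is exactly where the interaction of several simplices sharing two vertices must be exploited, and the degenerate case of a single simplex through the pair has to be isolated and treated separately. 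Once a pair with genuinely non-proportional functionals is produced, its Gale diagrams differ while it meets the simplex condition, completing the contrapositive.
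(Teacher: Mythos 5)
Your treatment of the first assertion and of the implication ``locally equilibrated $\Rightarrow$ biconditional'' is correct and follows essentially the paper's route: the paper likewise reduces everything to the simplex dependencies (there the characteristic functions $\chi_{S}$), shows $\mathcal{D}\left(X\right)\subset\ell\left(\mathcal{P}\left(X\right)\right)$ by successively adding simplex dependencies to kill negative values, and obtains both halves of the biconditional by evaluating a basis of simplex dependencies at $x$ and $y$. Your support--shrinking proof that the $d_{S}$ positively generate $\mathcal{P}\left(X\right)$, and the trick $v=\left(v+t\,d^{*}\right)-t\,d^{*}$, are in fact more careful than the paper's corresponding one--line claims; your unconditional argument that equal Gale diagrams force membership in the same simplices is the same as the paper's.

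The genuine gap is the remaining direction, ``biconditional $\Rightarrow$ locally equilibrated.'' You do not prove it: you reduce it to exhibiting, for a non-equilibrated simplex, a pair $x,y$ lying in the same simplices for which two simplices through the pair realise different ratios $d_{S_{1}}\left(x\right)/d_{S_{1}}\left(y\right)\neq d_{S_{2}}\left(x\right)/d_{S_{2}}\left(y\right)$, and you flag the case of a single simplex through the pair as a degeneracy ``to be treated separately.'' That case is not removable; it is a counterexample to the implication. Take $X=S=\left\{ e_{1},e_{2},-e_{1}-2e_{2}\right\} \subset\mathbb{R}^{2}$: then $\mathcal{D}\left(X\right)$ is one-dimensional, spanned by $\left(1,2,1\right)$, so every element of $X$ has the same Gale transform (the ray $\left\{ \alpha\mid\alpha\geq0\right\} $ in $\mathbb{R}^{1}$) and hence the same Gale diagram, and every element lies in the same (unique) simplex; the stated biconditional holds, yet $S$ is not equilibrated, since the only constant dependency of $X$ is zero. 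So the step you defer cannot be carried out in general. For comparison, the paper does not close this direction either: its paragraph beginning ``In order to show the converse'' proves that a pair lying in the same simplices with different Gale transforms forces a non-equilibrated simplex, which is the contrapositive of the \emph{forward} implication, not of the converse. To repair your plan you would have to add a hypothesis guaranteeing enough simplices through each pair (so that two distinct ratios actually occur), or else weaken the theorem to the one implication that both you and the paper actually establish.
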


\begin{proof}
For the first proposition it is sufficient to show that $\mathcal{D}\left(X\right)\subset\ell\left(\mathcal{P}\left(X\right)\right)$,
then there is a linear basis of $\mathcal{D}\left(X\right)$ contained
in $\mathcal{P}\left(X\right)$. Let $v\in\mathcal{D}\left(X\right)$.
If $v\left(x\right)\geq0$ for all $x\in X,$, then the proof is complete.
Otherwise, let $x_{k}\in X$ be with $v\left(x_{k}\right)<0$. By
theorem ..., there is an $S\in\mathcal{S}\left(X\right)$ with $x_{k}\in S=\left\{ x_{1},\ldots,x_{m}\right\} $.
Hence there are $\alpha_{1},\ldots,\alpha_{m}>0$ with $\sum_{i=1}^{m}\alpha_{i}\cdot x_{i}=0$.
Without loss of generality, we chose $\alpha_{k}=-v\left(x_{k}\right)$.
Then there is a $v_{1}\in\mathcal{P}\left(X\right)$ with $v_{1}\left(x_{i}\right)=\alpha_{i}$
and $v_{1}\left(x\right)=0$ otherwise. Then $v'=v+v_{1}\in\mathcal{D}\left(X\right)$
has $v'\left(x\right)<0$ only if $v\left(x\right)<0$, and $v'\left(x_{k}\right)=0$.
We repeat this procedure until we end with $v'\in\mathcal{P}\left(X\right)$.
We have shown that $v+\sum_{i=1}^{n}v_{i}\in\mathcal{P}\left(X\right)$,
hence $v\in\ell\left(\mathcal{P}\left(X\right)\right)$.

If $X$ is locally equilibrated, then for each simplex $S\in\mathcal{S}\left(X\right)$
the cone $\mathcal{P}\left(X\right)$ contains the characteristic
function $\chi_{S}$ of $S$. From the result above and theorem \ref{thm:Gen}
we find that the characteristic functions of simplices positively
span $\mathcal{P}\left(X\right)$. Thus, there is a linear basis of
$\mathcal{D}\left(X\right)$ consisting only of characteristic functions
$\chi_{S_{1}},\ldots,\chi_{S_{n}}$ of simplices $S_{1},\ldots,S_{n}\in\mathcal{S}\left(X\right)$,
which we choose to construct the Gale Transforms. Then, for any $S\in\mathcal{S}\left(X\right)$,
$\chi_{S}$ is a positive linear combination of the elements of the
basis. Then, clearly, whenever $x\in S\Leftrightarrow y\in S$ for
all $S\in\mathcal{S}\left(X\right)$ for $x,y\in X$, then also $\chi_{S_{i}}\left(x\right)=\chi_{S_{i}}\left(y\right)$
for $i=1,\ldots,n$, and the Gale Transform of $x$ and $y$ coincides.
Conversely, if $x$ and $y$ have the same Gale transform, then $\chi_{S_{i}}\left(x\right)=\chi_{S_{i}}\left(y\right)$
for $i=1,\ldots,n$, and $x\in S\Leftrightarrow y\in S$ for all $S\in\mathcal{S}\left(X\right)$,
which proofs the assertion.

In order to show the converse, first note that whenever $x,y\in X$
have identical Gale Transform, then there is an $\alpha>0$ such that
$v\left(x\right)=\alpha\cdot v\left(y\right)$ for each $v\in\mathcal{P}\left(X\right)$.
For each simplex $S\in\mathcal{S}\left(X\right)$ there is a dependency
$v_{S}\in\mathcal{P}\left(X\right)$ with $v_{S}\left(z\right)>0$
if and only if $z\in S$. This yields $x\in S\Leftrightarrow y\in S$
for all $S\in\mathcal{S}\left(X\right)$. Now, assume the latter equivalence
holds for some $x,y\in X$, but their Gale Transform differs. Then
there are simplices $S_{1},S_{2}\in\mathcal{S}\left(X\right)$ such
that for all $\alpha>0$
\[
\left(\begin{array}{c}
v_{S_{1}}\left(x\right)\\
v_{S_{2}}\left(x\right)
\end{array}\right)\neq\alpha\cdot\left(\begin{array}{c}
v_{S_{1}}\left(y\right)\\
v_{S_{2}}\left(y\right)
\end{array}\right).
\]
But this requires that at least one of the vectors has nonzero components.
In any case, by assumption, $x$ and $y$ are both contained in $S_{1}$
and $S_{2}$, and both vectors have nonzero components. As a consequence,
not both dependecies can be characteristic functions or positive multiples
thereof. Thus, either $S_{1}$ or $S_{2}$ is not equilibrated.
\end{proof}

\section{Positive Dependence}

We now define the skeleton of a set with subsets linearly spanning
a proper subset. This definition is consistent with the usual one
on simplicial complexes.
\begin{defn}
For any set $X$ we let
\begin{eqnarray*}
\mathrm{skel}\left(X\right) & = & \bigcup\left\{ \wp\left(A\right)\mid A\subset X,\:\ell\left(A\right)\varsubsetneq\ell\left(X\right)\right\} ,\\
\mathrm{core}\left(X\right) & = & \wp\left(X\right)\setminus\mathrm{skel}\left(X\right).
\end{eqnarray*}
\end{defn}

The skeleton is monotoneous.
\begin{lem}
If $X\subset Y$, then $\mathrm{skel}\left(X\right)\subset\mathrm{skel}\left(Y\right)$.
\end{lem}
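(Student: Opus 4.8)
The plan is to chase an arbitrary point through the definition of the skeleton, using the same subset as the witness on both sides. First I would fix an arbitrary $p\in\mathrm{skel}\left(X\right)$ and unfold the definition: there exists a subset $A\subset X$ with $\ell\left(A\right)\varsubsetneq\ell\left(X\right)$ and $p\in\wp\left(A\right)$. My candidate witness for the claim $p\in\mathrm{skel}\left(Y\right)$ will be this very same set $A$, so that no new cone elements need to be produced.

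To make $A$ a legitimate witness for $Y$, two things are required: that $A\subset Y$, and that $\ell\left(A\right)\varsubsetneq\ell\left(Y\right)$. The first is immediate from $A\subset X\subset Y$. For the second I would invoke monotonicity of the linear hull, namely that $X\subset Y$ forces $\ell\left(X\right)\subseteq\ell\left(Y\right)$; combining this with $\ell\left(A\right)\varsubsetneq\ell\left(X\right)$ yields the chain $\ell\left(A\right)\varsubsetneq\ell\left(X\right)\subseteq\ell\left(Y\right)$. Since $p\in\wp\left(A\right)$ as well, the set $A$ then certifies $p\in\wp\left(A\right)\subset\mathrm{skel}\left(Y\right)$, and as $p$ was arbitrary this gives $\mathrm{skel}\left(X\right)\subset\mathrm{skel}\left(Y\right)$.

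The only point requiring even slight care is checking that the strict inclusion $\ell\left(A\right)\varsubsetneq\ell\left(X\right)$ survives the passage to the larger hull $\ell\left(Y\right)$. This is purely formal: if one had $\ell\left(A\right)=\ell\left(Y\right)$, then together with $\ell\left(A\right)\subseteq\ell\left(X\right)\subseteq\ell\left(Y\right)$ this would squeeze $\ell\left(A\right)=\ell\left(X\right)$, contradicting strictness. I therefore do not anticipate any genuine obstacle; the statement is a direct diagram chase through the definition of $\mathrm{skel}\left(\cdot\right)$, and nothing about positive spanning sets, simplices, or the cone structure of $\wp\left(\cdot\right)$ is needed beyond the elementary monotonicity of $\ell\left(\cdot\right)$.
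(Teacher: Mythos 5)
Your proof is correct and is essentially identical to the paper's: both take the witness set $A\subset X$ with $p\in\wp\left(A\right)$ and $\ell\left(A\right)\varsubsetneq\ell\left(X\right)$ and reuse it verbatim as a witness for $Y$, noting that $\ell\left(A\right)\varsubsetneq\ell\left(X\right)\subseteq\ell\left(Y\right)$ preserves the strict inclusion. Your extra remark justifying why strictness survives the enlargement of the hull is a small elaboration the paper leaves implicit, but the argument is the same.
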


\begin{proof}
Assume $X\subset Y$ and $z\in\mathrm{skel}\left(X\right)$. Then
there is an $A\subset X$ such that $\ell\left(A\right)\subsetneq\ell\left(X\right)$
with $z\in\wp\left(A\right)$. But then a fortiori, $A\subset Y$
and $\ell\left(A\right)\subsetneq\ell\left(Y\right)$, so $z\in\mathrm{skel}\left(Y\right)$.
\end{proof}
If $X$ is finite, then the skeleton of $X$ is a closed subset of
$X$; thus, the core is open. Its componenets are characterized by
the following lemma, which is not essential for the main results,
but for illustration purposes only.
\begin{lem}
For a finite set $X$ linearly spanning $\mathbb{R}^{d}$, each component
of the core of $X$ is contained in a set of the form $\mathrm{rint}\wp\left(B\right)$
for some linear basis $B\subset X$ of $\mathbb{R}^{d}$. If, moreover,
$X$ is positively independent, then each component is of this form.
\end{lem}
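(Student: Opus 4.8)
The plan is to treat the two assertions in turn, reducing each to a statement about a single simplicial cone $\wp(B)$ attached to a basis $B\subset X$, and then to close the argument by connectedness. For the first (inclusion) assertion I would fix a connected component $C$ of $\mathrm{core}(X)$ and a point $p\in C$. Since $p\in\wp(X)$, Lemma \ref{lem:Car} together with the reduction carried out in its proof (equivalently, choosing a representation of $p$ with the fewest terms) lets me write $p=\sum_{i=1}^{k}\alpha_{i}x_{i}$ with all $\alpha_{i}>0$ and $\left\{ x_{1},\dots,x_{k}\right\} \subset X$ \emph{linearly independent}, so that $p\in\mathrm{rint}\,\wp(\left\{ x_{1},\dots,x_{k}\right\} )$. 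If $k<d$, then $\left\{ x_{1},\dots,x_{k}\right\} $ spans a proper subspace and hence $p\in\mathrm{skel}(X)$, contradicting $p\in\mathrm{core}(X)$. Thus $k=d$ and $B:=\left\{ x_{1},\dots,x_{d}\right\} $ is a basis of $\mathbb{R}^{d}$ contained in $X$ with $p\in\mathrm{int}\,\wp(B)=\mathrm{rint}\,\wp(B)$.

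To pass from $p\in\mathrm{rint}\,\wp(B)$ to $C\subset\mathrm{rint}\,\wp(B)$, I would show that $U:=C\cap\mathrm{int}\,\wp(B)$ is relatively clopen in $C$. It is open (as $\mathrm{int}\,\wp(B)$ is open) and nonempty (as $p\in U$). For relative closedness, take $q\in C$ in the relative closure of $U$; then $q\in\overline{\mathrm{int}\,\wp(B)}=\wp(B)$. Were $q$ on the topological boundary of $\wp(B)$, then, $\wp(B)$ being simplicial, $q\in\wp(B\setminus\left\{ b\right\} )$ for some $b\in B$, and $\ell(B\setminus\left\{ b\right\} )$ is a hyperplane, so $q\in\mathrm{skel}(X)$, contradicting $q\in C\subset\mathrm{core}(X)$. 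Hence $q\in\mathrm{int}\,\wp(B)$, i.e. $q\in U$. Since $C$ is connected (the core is open, so its components are open), this forces $U=C$, that is $C\subset\mathrm{rint}\,\wp(B)$, which is the first assertion.

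For the second assertion I would keep this basis $B$ with $C\subset\mathrm{rint}\,\wp(B)$ and observe that $\mathrm{rint}\,\wp(B)$ is convex, hence connected, and meets $C$. Consequently, if I can prove that $\mathrm{rint}\,\wp(B)\subset\mathrm{core}(X)$, then maximality of the component $C$ forces $\mathrm{rint}\,\wp(B)\subset C$, whence $C=\mathrm{rint}\,\wp(B)$ is of the required form. Everything therefore reduces to showing, \emph{now using positive independence}, that $\mathrm{rint}\,\wp(B)\cap\mathrm{skel}(X)=\emptyset$. Suppose not: some $q\in\mathrm{int}\,\wp(B)$ also lies in $\wp(A)$ for a minimal $A\subset X$ with $\ell(A)$ proper, so that $A$ is linearly independent, $\left|A\right|<d$, and $q=\sum_{a\in A}\gamma_{a}a$ with $\gamma_{a}>0$, while $q=\sum_{b\in B}\beta_{b}b$ with every $\beta_{b}>0$. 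Subtracting the two expressions yields a nontrivial linear relation among the vectors of $A\cup B\subset X$.

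I expect this final step to be the main obstacle. The relation just produced carries positive coefficients on $B$ and negative coefficients on $A$, so it is not immediately a positive-dependence certificate, which would require all but one of the coefficients to share a sign; turning it into an honest relation $x\in\wp(X\setminus\left\{ x\right\} )$ is precisely where the hypothesis has to do work. My plan here is to invoke the normal form of Theorem \ref{thm:Inter}, writing $X=B_{0}\cup\left\{ x_{1},\dots,x_{n}\right\} $ with each $x_{i}\in-\mathrm{rint}\,\wp(A_{i})$, $A_{i}\subset B_{0}$, and to use the factorisation identity $\ell(Y)\cap\ell(S)=\ell(Y\cap S)$ to constrain how the proper subspace $\ell(A)$ can cross the full-dimensional cone $\wp(B)$; the aim is to show that such an interior crossing forces two of the generating simplices to overlap in a manner incompatible with the intersection property, contradicting positive independence. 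This is the delicate point, and I anticipate that it is where the full positive-basis structure, rather than positive independence used in isolation, is genuinely required; once $\mathrm{rint}\,\wp(B)\cap\mathrm{skel}(X)=\emptyset$ is secured, the connectedness/maximality argument above completes the proof.
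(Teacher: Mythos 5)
The first half of your argument is correct and is essentially the paper's own: Carath\'eodory for cones (Lemma \ref{lem:Car}, with the fewest--terms reduction giving linear independence) plus the dimension count places a core point in $\mathrm{rint}\wp(B)$ for a basis $B\subset X$, and since the topological boundary of the simplicial cone $\wp(B)$ is a union of facets $\wp(B\setminus\{b\})$ lying in $\mathrm{skel}(X)$, connectedness (your clopen argument; the paper runs a path to the boundary) yields $C\subset\mathrm{rint}\wp(B)$. Your reduction of the second assertion to the single claim $\mathrm{rint}\wp(B)\cap\mathrm{skel}(X)=\emptyset$ is also the right reduction.

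That claim, however, is exactly where your proof stops, and the gap is genuine: the mixed-sign relation between $A$ and $B$ is never converted into a positive dependence, and under the hypotheses as stated it cannot be, because the claim is false. The paper's own Example \ref{exa:LIndnD} is a counterexample: $X=\left\{ e_{1},e_{2},e_{3},z\right\} $ with $z=e_{1}+e_{2}-e_{3}$ is positively independent and linearly spans $\mathbb{R}^{3}$, yet $e_{1}+e_{2}+e_{3}=2e_{3}+z\in\wp\left(\left\{ e_{3},z\right\} \right)$ with $\ell\left(\left\{ e_{3},z\right\} \right)$ a proper subspace, so this point lies in $\mathrm{int}\wp\left(\left\{ e_{1},e_{2},e_{3}\right\} \right)\cap\mathrm{skel}\left(X\right)$. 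Writing points as $(u,v,w)$, the component of $\mathrm{core}(X)$ containing $(2,1,1)$ works out to $\left\{ u>v>0,\:w>0\right\} $, which is not $\mathrm{rint}\wp(B')$ for any of the four bases $B'\subset X$. So positive independence together with linear spanning does not suffice, and your fallback plan of invoking the normal form of Theorem \ref{thm:Inter}(iv) is unavailable as stated, since that theorem presupposes that $X$ \emph{positively} spans $\mathbb{R}^{d}$, which the lemma does not assume. (The paper's own one-line justification of this step --- ``$A\cap\wp(B)\subset B$, hence $y$ lies in the boundary of $\wp(B)$'' --- is a non sequitur for the same reason: $\wp(A)$ can enter $\mathrm{int}\wp(B)$ even when no individual element of $A$ lies in $\wp(B)\setminus B$.) The intended hypothesis is presumably that $X$ is a positive basis; only then does Theorem \ref{thm:Inter} become available to control how a proper subspace $\ell(A)$ can meet $\mathrm{int}\wp(B)$, and only then should you expect the second assertion to hold.
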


\begin{proof}
Let $X$ as above and $O$ be a component of $\mathrm{core}\left(X\right)$.
Then $O$ is open. For $x\in O$, by lemma \ref{lem:Car} there are
$x_{1},\ldots,x_{m}\in X$, $m\leq d$, with $x=\sum_{i=1}^{m}\alpha_{i}x_{i}$,
$\alpha_{i}>0$. Set $B=\left\{ x_{1},\ldots,x_{m}\right\} $. If
the dimension of $\wp\left(B\right)$ is less than $d$, then $\ell\left(B\right)\subsetneq\ell\left(X\right)$
and $x\in\wp\left(B\right)\subset\mathrm{skel}\left(X\right)$, a
contradiction. So, the dimension of $\wp\left(B\right)$ is $d$,
which (by remark \ref{rem:Bas}) requires $m=d$ and $B$ to be a
basis of $\mathbb{R}^{d}$. Hence $x\in\mathrm{rint}\wp\left(B\right)$.

For any $y\in O\setminus\mathrm{rint}\wp\left(B\right)$, there must
be a path from $x$ to $y$ contained in $O$. As the boundary of
$\wp\left(B\right)$ is contained in the skeleton of $X$, the arc
must intersect the boundary at some $z\in\mathrm{skel}\left(X\right)$,
a contradiction to $O$ being contained in the core of $X$. Thus
$O\subset\mathrm{rint}\wp\left(B\right)$.

Now, assume further that $X$ is positively independent. If $y\in\mathrm{rint}\wp\left(B\right)\cap\mathrm{skel}\left(X\right)$,
then there would be a set $A\subset X$ with $y\in\wp\left(A\right)$
and $\ell\left(A\right)\subsetneq\ell\left(X\right)=\ell\left(B\right)$.
Since $X$ is positively independent, $A\cap\wp\left(B\right)\subset B$.
Hence $y$ lies in the boundary of $\wp\left(B\right)$, a contradiction.
Thus $\mathrm{rint}\wp\left(B\right)\subset O$.
\end{proof}
The following lemma is crucial for the proof of theorem \ref{thm:Inter}.
It states two necessary and sufficient conditions for the existence
of a positive dependency within a simplex and one added element.
\begin{lem}
\label{lem:Sxy}Let $S$ be a simplex, and $y\in\ell\left(S\right)\setminus S$.
The following three propositions are equivalent:
\begin{description}
\item [{(i)}] There is an $x\in S$ with $x\in\wp\left(S\left[x\rightarrow y\right]\right)$.
\item [{(ii)}] For all $R\in\mathcal{S}\left(S\cup\left\{ y\right\} \right)$,
$\ell\left(R\right)=\ell\left(S\right)$.
\item [{(iii)}] $-y\notin\mathrm{skel}\left(S\right).$
\end{description}
\end{lem}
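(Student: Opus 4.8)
The plan is to reduce all three conditions to a single geometric statement about where $-y$ sits relative to the cone subdivision of $\ell(S)$ induced by $S$, and the linchpin will be the rigidity of positive dependencies inside a simplex. Writing $m=\dim\ell(S)$, so that $\left|S\right|=m+1$, I would first record from Lemma \ref{lem:Sim} that each $B_{x}=S\setminus\{x\}$ is a linear basis of $\ell(S)$ with $-x\in\mathrm{rint}\,\wp(B_{x})$, and that the dependency space $\mathcal{D}(S)$ is one-dimensional, spanned by a strictly positive vector $(\alpha_{z})_{z\in S}$ satisfying $\sum_{z\in S}\alpha_{z}z=0$. Two consequences I would isolate immediately: every proper subset of $S$ is linearly independent, whence
\[
\mathrm{skel}(S)=\bigcup\{\wp(T)\mid T\subsetneq S,\ \left|T\right|\le m-1\};
\]
and, by subtracting a suitable multiple of the dependency from any positive representation, $\ell(S)=\wp(S)=\bigcup_{x\in S}\wp(B_{x})$.

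The crux, which I would prove as a standalone claim, is that $\mathrm{rint}\,\wp(B_{x})$ never meets $\mathrm{skel}(S)$. Suppose $w=\sum_{z\in B_{x}}c_{z}z$ with all $c_{z}>0$ and simultaneously $w=\sum_{z\in T}d_{z}z$ with $d_{z}\ge0$ for some $T\subsetneq S$, $\left|T\right|\le m-1$. Subtracting gives a dependency $\sum_{z\in S}\lambda_{z}z=0$. Since $\mathcal{D}(S)$ is one-dimensional with a strictly positive generator, every dependency has all coordinates of one sign; but $x\notin B_{x}$ receives the nonpositive coefficient $-d_{x}$ (or $0$), while each $z\in B_{x}\setminus T$, a nonempty set because $\left|T\right|\le m-1<\left|B_{x}\right|$, receives the positive coefficient $c_{z}$, a contradiction. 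I expect this to be the main obstacle, since it is the only place one must exploit the rigidity of the simplex rather than routine Carath\'{e}odory-type reductions. Together with the covering $\ell(S)=\bigcup_{x}\wp(B_{x})$ and a reduction to the positive support, this yields the dichotomy I will use: for $w\in\ell(S)$, one has $w\notin\mathrm{skel}(S)$ if and only if $w\in\mathrm{rint}\,\wp(B_{x})$ for some $x\in S$.

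I would then prove (iii)$\Leftrightarrow$(i) by explicit coefficients. Assuming (iii), the dichotomy gives $-y=\sum_{z\in B_{x}}c_{z}z$ with all $c_{z}>0$; normalizing the dependency so that $-x=\sum_{z\in B_{x}}\alpha_{z}z$ with $\alpha_{z}>0$, I set $\gamma=\max_{z}\alpha_{z}/c_{z}>0$ and $\beta_{z}=\gamma c_{z}-\alpha_{z}\ge0$, and a one-line computation shows $x=\gamma y+\sum_{z\in B_{x}}\beta_{z}z\in\wp(S[x\rightarrow y])$, which is (i). Conversely, given (i) with $x=\gamma y+\sum_{z\in B_{x}}\beta_{z}z$, I would note $\gamma>0$, for otherwise $x$ and $-x$ both lie in $\wp(B_{x})$, forcing a nontrivial dependency in the independent set $B_{x}$; solving for $y$ then yields $-y=\gamma^{-1}\sum_{z\in B_{x}}(\alpha_{z}+\beta_{z})z\in\mathrm{rint}\,\wp(B_{x})$, so $-y\notin\mathrm{skel}(S)$ by the claim, i.e. (iii).

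Finally I would establish (iii)$\Leftrightarrow$(ii) via the correspondence between simplices in $S\cup\{y\}$ and subsets of $S$. Since $\mathcal{S}(S)=\{S\}$, any $R\in\mathcal{S}(S\cup\{y\})$ other than $S$ must contain $y$; writing $T=R\setminus\{y\}\subsetneq S$, Lemma \ref{lem:Sim} forces $-y\in\mathrm{rint}\,\wp(T)$ and $\ell(R)=\ell(T)$, while conversely any $T\subsetneq S$ with $-y\in\mathrm{rint}\,\wp(T)$ produces a simplex $R=T\cup\{y\}$ by Lemma \ref{lem:Sim}(iii). As $\dim\ell(R)=\left|T\right|$, some $R$ has $\ell(R)\subsetneq\ell(S)$ exactly when some such $T$ satisfies $\left|T\right|\le m-1$, that is, exactly when $-y\in\mathrm{skel}(S)$; negating gives (ii)$\Leftrightarrow$(iii), which together with (i)$\Leftrightarrow$(iii) yields the equivalence of all three statements.
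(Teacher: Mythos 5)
Your proof is correct, but it is organized differently from the paper's. The paper proves (i)$\Leftrightarrow$(ii) directly --- the hard direction (i)$\Rightarrow$(ii) by a coefficient comparison inside an extended basis, and (ii)$\Rightarrow$(i) by invoking Theorem \ref{thm:Gen} to produce a simplex $R\ni y$ with $\wp\left(R\right)=\ell\left(S\right)$ --- and then separately proves (ii)$\Leftrightarrow$(iii). You instead make (iii) the hub: your standalone claim that $\mathrm{rint}\,\wp\left(B_{x}\right)$ is disjoint from $\mathrm{skel}\left(S\right)$, combined with the covering $\ell\left(S\right)=\bigcup_{x}\wp\left(B_{x}\right)$, gives a clean dichotomy (a point of $\ell\left(S\right)$ is either in the skeleton or in the relative interior of exactly one facet cone), from which (i)$\Leftrightarrow$(iii) follows by an explicit and fully checkable coefficient computation; your (ii)$\Leftrightarrow$(iii) matches the paper's. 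The essential mechanism is the same in both arguments --- the one-dimensionality of $\mathcal{D}\left(S\right)$ with a strictly positive generator, i.e.\ the rigidity of the simplex --- but you isolate it once as a sign argument on dependencies, whereas the paper re-derives it inline in the (i)$\Rightarrow$(ii) case analysis (including a slightly awkward separate treatment of the case $k=0$). What your route buys is transparency and reusability: the fan structure of the simplex cone is stated explicitly rather than left implicit. What the paper's route buys is a shorter (ii)$\Rightarrow$(i), since Theorem \ref{thm:Gen} hands over the needed simplex $R$ for free. Both arguments lean on Lemma \ref{lem:Sim} in the same way, and I see no gap in yours.
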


\begin{proof}
``(i)$\Rightarrow$(ii)'': Assume there is an $R\in\mathcal{S}\left(S\cup\left\{ y\right\} \right)$
with $\ell\left(R\right)\neq\ell\left(S\right)$. Since $y\in\ell\left(S\right)$,
this means $\ell\left(R\right)\subsetneq\ell\left(S\right)$, and
further $R\neq S$. By definition of a simplex, $R\nsubseteq S$.
Hence $y\in R$. For the same reason, $S\nsubseteq R$, thus, by lemma
\ref{lem:Sim}, $C=R\cap S$ is a basis of a proper subspace of $\ell\left(S\right)$.

Extend $C$ to a basis $B\subset S$ of $\ell\left(S\right)$, so
that $B=\left\{ x_{1},\ldots,x_{n}\right\} $, $C=\left\{ x_{1},\ldots,x_{m}\right\} $,
$m<n$, $S=B\cup\left\{ x_{0}\right\} $, and $R=C\cup\left\{ y\right\} $.
By lemma \ref{lem:Sim}, we can write
\begin{eqnarray*}
x_{0} & = & -\sum_{i=1}^{n}\delta_{i}x_{i},\\
y & = & -\sum_{i=1}^{m}\gamma_{i}x_{i},
\end{eqnarray*}
with $\delta_{i},\gamma_{i}>0$.

We have to show that no $x_{k}\in\wp\left(S\left[x_{x}\rightarrow y\right]\right)$
for $k=0,\ldots,n$. Assume the contrary. For $k>0$ we can write
with $\alpha_{i}\geq0$, $\alpha_{k}=0$, $\beta\geq0$,
\begin{eqnarray*}
x_{k} & = & \sum_{i=0}^{n}\alpha_{i}x_{i}+\beta y\\
 & = & \sum_{i=1}^{n}\left(\alpha_{i}-\beta\gamma_{i}-\alpha_{0}\delta_{i}\right)x_{i},\:\text{or}\\
x_{k}\left(1+\beta\gamma_{k}+\alpha_{0}\delta_{k}\right) & = & \sum_{\underset{i\neq k}{i=1}}^{n}\left(\alpha_{i}-\beta\gamma_{i}-\alpha_{0}\delta_{i}\right)x_{i}.
\end{eqnarray*}
As $\beta\gamma_{k}+\alpha_{0}\delta_{k}\geq0$, this means $x_{k}\in\wp\left(S\setminus\left\{ x_{k}\right\} \right)$,
contradicting the assumption that $S$ is a simplex. For the case
$k=0$, since $C\subsetneq B$, by lemma \ref{lem:Sim} one chooses
a different basis $B'\supset C$ containing $x_{0}$ and proceedes
as above. Thus $x_{k}\notin\wp\left(S\left[x_{x}\rightarrow y\right]\right)$
for $k=0,\ldots,n$.

``(ii)$\Rightarrow$(i)'': Assume that for all $R\in\mathcal{S}\left(S\cup\left\{ y\right\} \right)$
we have $\ell\left(R\right)=\ell\left(S\right)$. Since $\ell\left(S\right)=\wp\left(S\right)$
is symmetric, and $y\in\ell\left(S\right)$, by theorem \ref{thm:Gen}
there is an $R\in\mathcal{S}\left(S\cup\left\{ y\right\} \right)$
with $y\in R$. By assumption, $\ell\left(R\right)=\ell\left(S\right)$.
By definition of a simplex, we can not have $S\subset R$. So, there
is an $x\in S\setminus R$ with
\[
x\in\ell\left(S\right)=\wp\left(R\right)\subset\wp\left(S\left[x\rightarrow y\right]\right).
\]

``(ii)$\Rightarrow$(iii)'': Let $R\subset S\cup\left\{ y\right\} $
be a simplex with $\ell\left(R\right)\subsetneq\ell\left(S\right)$.
If $y\notin R$, then $R\subset S$ and, by definition, $R=S$, a
contradiction. Thus $y\in R$. By lemma \ref{lem:Sim}, $A=R\setminus\left\{ y\right\} $
is linearly independent, and $-y\in\mathrm{rint}\wp\left(A\right)$.
Since $\ell\left(A\right)\subsetneq\ell\left(S\right)$, $\wp\left(A\right)\subset\mathrm{skel}\left(S\right)$.
Hence $-y\in\mathrm{skel}\left(S\right)$.

``(iii)$\Rightarrow$(ii)'': Assume $-y\in\mathrm{skel}\left(S\right)$.
Then there is an $A\subset S$ with $-y\in\wp\left(A\right)\subset\ell\left(A\right)\subsetneq\ell\left(S\right)$.
Let $B\subset A$ be a minimal set with $-y\in\wp\left(B\right)$.
Then $-y\in\mathrm{rint}\wp\left(B\right)$, and by lemma \ref{lem:Sim},
$R=B\cup\left\{ y\right\} $ is a simplex with $\ell\left(R\right)\subset\ell\left(A\right)\subsetneq\ell\left(S\right)$.
\end{proof}
\begin{example}
Let $S=\left\{ a,b,c\right\} \subset\mathbb{R}^{2}\setminus\left\{ 0\right\} $
be a simplex. Then $\mathrm{skel}\left(S\right)=\wp\left(\left\{ a\right\} \right)\cup\wp\left(\left\{ b\right\} \right)\cup\wp\left(\left\{ c\right\} \right)$.
For $y\in-S$ we have $-y\in\mathrm{skel}\left(S\right)$ and $R=\left\{ y,-y\right\} $
is simplex in $S\cup\left\{ y\right\} $. For $y\in\ell\left(S\right)\setminus-S$
we find an $x\in S$ with $x\in\wp\left(S\left[x\rightarrow y\right]\right)$..
\end{example}

\section{Positive Bases}

We are now in a position to prove theorem \ref{thm:Inter}.
\begin{proof}
``(i)$\Rightarrow$(ii)'': We assume that condition (ii) does not
hold for some subset $Y\subset X$ and a simplex $S\in\mathcal{S}\left(X\right)$.
Let $L=\ell\left(Y\right)\cap\ell\left(S\right)$ and $M=\ell\left(Y\cap S\right)$.
Since, obviously, $M\subset L$, $M\neq L$ spells out as $M\subsetneq L$
($M$ may be null). Then we indicate that $L=M^{\bot}\oplus M$ with
some non-null linear space $M^{\bot}$ orthogonal to $M$. Pick $y\in M^{\bot}\setminus\left(S\cup-S\cup\left\{ 0\right\} \right)$,
then also $-y\in M^{\bot}\setminus\left(S\cup\left\{ 0\right\} \right)$.

Assuming both $y,-y\in\mathrm{skel}\left(S\right)$. Then, by definition,
there are sets $C,D\varsubsetneq S$ with $y\in\wp\left(C\right)\subset\ell\left(C\right)$
and $\ell\left(C\right)\subsetneq\ell\left(S\right)$, and $-y\in\wp\left(D\right)\subset\ell\left(D\right)$
and $\ell\left(D\right)\subsetneq\ell\left(S\right)$. Set $R=C\cap D$.
Since both $C$ and $D$ have at least two elements less than $S$,
$C\cup D$ is linearly independent (lemma \ref{lem:Sim}), and so
is $R$. Moreover, $\ell\left(C\right)\cap\ell\left(D\right)=\ell\left(R\right)$,
hence $y,-y\in\ell\left(R\right)$. By comparison of the coefficients
we conclude that both $y$ and $-y$ are a linear combination of elements
of $R$ with non-negative coefficients, as the coefficients of $y$
in $C$ and $-y$ in $D$ are non-negative. Hence $y,-y\in\wp\left(R\right)$.
It follows that $0\in\wp\left(R\right)\varsubsetneq\wp\left(S\right)$,
a contradiction to the fact that $S$ is a simplex (lemma \ref{lem:Sim}).

Therefore, either $y\notin\mathrm{skel}\left(S\right)$, and, by lemma
\ref{lem:Sxy}, there is an $x\in S$ with $x\in\wp\left(S\left[x\rightarrow y\right]\right)$,
or $-y\notin\mathrm{skel}\left(S\right)$, and, by the same lemma,
there is an $x\in S$ with $x\in\wp\left(S\left[x\rightarrow-y\right]\right)$,
as $\left\{ y,-y\right\} \cap S=\emptyset$ but $y,-y\in\ell\left(Y\right)\subset\wp\left(X\right)$.
In either case, $X$ is positively dependent.

``(ii)$\Rightarrow$(iii)'': Trivial.

``(iii)$\Rightarrow$(i)'': The proof is by induction over the number
of simplices contained in $X$. Let $n=\left|\mathcal{S}\left(X\right)\right|$.
For $n=1$ we find that $X$ is a single simplex, and there is nothing
left to prove. Now assume that the implication has been proven for
all proper subsets $Y$ of $X$ positively spanning a linear subspace.
Assume that (i) is false and there is an $x\in X$ with $x\in\wp\left(X\setminus\left\{ x\right\} \right)$.
Since $X$ is a positively spanning set, we can choose an $S\in\mathcal{S}\left(X\right)$
with $x\in S$. Let $Y=S'$ be the lattice complement of $S$ in $\mathcal{L}\left(X\right)$
from theorem \ref{thm:Lat}, then $Y$ is a proper positively spanning
subset, which can be written as a union of $n-1$ elements. As $X=Y\cup S$
we can write $x=y+z$ with $y\in\wp\left(Y\setminus\left\{ x\right\} \right)\subset\ell\left(Y\right)$
and $z\in\wp\left(S\setminus\left(Y\cup\left\{ x\right\} \right)\right)\subset\ell\left(S\right)$.
So, $y=x-z\in\ell\left(S\right)\cap\ell\left(Y\right)$.

Assume furthermore, that $y\in\ell\left(S\cap Y\right)$. This requires
$z=0$, and thus $x=y$. But then we have $x\in\wp\left(Y\setminus\left\{ x\right\} \right)$
and $Y$ is positively dependent, in contradiction to the induction
hypothesis which states (i) for $Y$. We conclude that $x\in\wp\left(X\setminus\left\{ x\right\} \right)$
implies that $\ell\left(Y\right)\cap\ell\left(S\right)\neq\ell\left(Y\cap S\right)$
for some positively spanning subset $Y\subset X$ and a simplex $S\in\mathcal{S}\left(X\right)$.

``(i)$\wedge$(ii)$\Rightarrow$(iv)'': As $X$ positively spans
$\mathbb{R}^{d}$, by theorem \ref{thm:Gen}, $X$ is a sum of simplices.
Let $S_{1},\ldots,S_{n}$ be a minimal set of simplices from $\mathcal{S}\left(X\right)$
with $\bigcup_{i=1}^{n}S_{i}=X$, where $X$ is a positive basis of
$\mathbb{R}^{d}$. Then each $S_{i}$ contains an element $x_{i}$
not contained in any other simplex. By lemma \ref{lem:Sim}, $A_{i}=S_{i}\setminus\left\{ x_{i}\right\} $
is linearly independent for $i=1,\ldots,n$.

We set $B=A_{1}\cup\cdots\cup A_{n}$ and show that $B$ is a linear
basis of $\mathbb{R}^{d}$. Assume, in contrast, that $B$ is linearly
dependent. Then there is an $x\in B$ with $x\in\ell\left(B\setminus\left\{ x\right\} \right)$.
Choose an $i$ with $x\in A_{i}\subset S_{i}$. As $x_{i}\notin B$,
we obtain by (ii)
\[
x\in\ell\left(B\setminus\left\{ x\right\} \right)\cap\ell\left(S_{i}\right)=\ell\left(\left(B\setminus\left\{ x\right\} \right)\cap S_{i}\right)=\ell\left(A_{i}\setminus\left\{ x\right\} \right).
\]
But this is impossible, since $A_{i}$ is linearly independent. Hence,
$B$ is linearly independent. Moreover,
\[
\ell\left(B\right)=\sum_{i=1}^{n}\ell\left(A_{i}\right)=\sum_{i=1}^{n}\ell\left(S_{i}\right)=\ell\left(X\right)=\mathbb{R}^{d},
\]
thus $B$ is a linear basis of $\mathbb{R}^{d}$. Therefore, $\left|B\right|=d$
and, further, $1\leq n\leq d$.

We show that each $S\in\mathcal{S}\left(X\right)$ has with $B$ all
but one element in common. Indeed, by (ii)
\[
\ell\left(S\right)=\ell\left(S\right)\cap\ell\left(B\right)=\ell\left(S\cap B\right).
\]
The assertion then follows from lemma \ref{lem:Sim}. Lemma \ref{lem:Sim}
also asserts that $x_{i}\in-\mathrm{rint}\wp\left(A_{i}\right)$ .

It remains to show that $A_{j}\nsubseteq A_{i}$ for $i\neq j$. Assuming
the contrary, then $x_{j}\in-\wp\left(A_{i}\right)=\wp\left(S_{i}\right)$,
and $x_{j}\notin S_{i}$. This constitutes a positive dependency within
$X$, contradicting (i). This completes the proof of (iv).

``(iv)$\Rightarrow$(iii)'': Assume that $X=B\cup\left\{ x_{1},\ldots,x_{n}\right\} $,
$1\leq n\leq d$, with a basis $B$ of $\ell\left(X\right)$, such
that each $x_{i}\in-\mathrm{rint}\wp\left(A_{i}\right)$ for a minimal
subset $A_{i}\subset B$, and $A_{i}\nsubseteq A_{j}$ for $i\neq j$.
Lemma \ref{lem:Sim} assures that $S_{i}=A_{i}\cup\left\{ x_{i}\right\} $
is a simplex. This implies that $x_{i}\neq x_{j}$ and $A_{i}\nsubseteq A_{j}$
for $i\neq j$. Since, by assumption, every simplex from $\mathcal{S}\left(X\right)$
has exactly all but one element with $B$ in common, $S_{1},\ldots,S_{n}$
are exactly the simplices in $X$.

Let $Y\subset X$ be a positively spanning set. By theorem \ref{thm:Gen},
$Y=\bigcup_{j\in J}S_{j}$ with $S_{j}\in\mathcal{S}\left(Y\right)$.
If $i\in J$, then $S_{i}\subset Y$, and $\ell\left(Y\right)\cap\ell\left(S_{j}\right)=\ell\left(Y\cap S_{j}\right)$.
Therefore, $i\notin J$. For $i\neq j$, we find that $S_{i}\cap S_{j}=A_{i}\cap A_{j}\subset B$
and $x_{i}\notin\ell\left(A_{j}\right)$ as well as $x_{j}\notin\ell\left(A_{i}\right)$.
Hence,

\begin{eqnarray*}
\ell\left(Y\right)\cap\ell\left(S_{i}\right) & = & \left(\sum_{j\in J}\ell\left(A_{j}\right)\right)\cap\ell\left(A_{i}\right)\\
 & = & \sum_{j\in J}\ell\left(A_{j}\cap A_{i}\right)\\
 & \subset & \ell\left(Y\cap S_{i}\right)\subset\ell\left(Y\right)\cap\ell\left(S_{i}\right).
\end{eqnarray*}
\end{proof}
The following result is from Reay \cite[Th 2]{REA65}, which was used
to prove the Bonnice-Klee Theorem (see also \cite[Th 10]{SHE71}).
\begin{cor}
\label{cor:Reay}Let $X$ be any positive basis of $\mathbb{R}^{d}$.
Then $X$ admits a decomposition into pairwise disjoint subsets $X=X_{1}\cup\cdots\cup X_{n}$,
$n\leq d$, such that $\left|X_{i}\right|\geq\left|X_{i+1}\right|\geq2$
for $i=1,\ldots,n-1$, and $\wp\left(X_{1}\cup\cdots\cup X_{k}\right)$
is a linear subspace of $\mathbb{R}^{d}$ of dimension $\sum_{i=1}^{k}\left|X_{i}\right|-k$.
\end{cor}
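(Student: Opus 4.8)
The plan is to start from the structural description in theorem \ref{thm:Inter}(iv). Since $X$ is a positive basis it is positively independent, so (iv) supplies a linear basis $B$ of $\mathbb{R}^{d}$ together with extra vectors $x_{1},\dots,x_{n}$ ($n\le d$) and associated sets $A_{i}\subset B$ forming an antichain, such that by lemma \ref{lem:Sim} the sets $S_{i}=A_{i}\cup\{x_{i}\}$ are simplices, and by the ``all but one element'' clause of (iv) these are precisely the elements of $\mathcal{S}\left(X\right)$. Because $X=\bigcup\mathcal{S}\left(X\right)=\bigcup_{i}S_{i}$ by theorem \ref{thm:Gen} and $x_{i}\notin B$, every basis vector lies in some $A_{i}$, so $\bigcup_{i=1}^{n}A_{i}=B$. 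Counting dimensions suggests that each block should carry exactly one extra vector and $\left|X_{j}\right|-1$ new basis vectors: this reduces the whole statement to ordering the simplices as $S_{\pi\left(1\right)},\dots,S_{\pi\left(n\right)}$ so that each $A_{\pi\left(j\right)}$ contributes at least one basis vector not yet used, with the per-step contribution non-increasing.

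I would produce such an ordering greedily. Having placed $S_{\pi\left(1\right)},\dots,S_{\pi\left(j-1\right)}$ with current union $C_{j-1}=\bigcup_{i<j}A_{\pi\left(i\right)}$, I choose for $\pi\left(j\right)$ an unused simplex maximising the number of fresh vectors $\left|A_{i}\setminus C_{j-1}\right|$, and set $B_{j}=A_{\pi\left(j\right)}\setminus C_{j-1}$ and $X_{j}=B_{j}\cup\{x_{\pi\left(j\right)}\}$. Two points make this work. First, the greedy never stalls: a stall would mean every unused simplex satisfies $A_{i}\subset C_{j-1}$, so the placed simplices already have $\bigcup A=B$ while some simplex remains, i.e.\ a proper subcollection of the simplices positively spans $\mathbb{R}^{d}$ by theorem \ref{thm:Gen}, contradicting the minimality of the positive basis $X$. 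Hence at every stage some unused $A_{i}$ meets $B\setminus C_{j-1}$, so $B_{j}\neq\emptyset$ and $\left|X_{j}\right|\ge 2$. Second, the sizes come out non-increasing: any simplex still available at stage $j+1$ had at most $\left|B_{j}\right|$ fresh vectors at stage $j$ (as $\pi\left(j\right)$ was chosen maximal), and can only have fewer once $C_{j}\supseteq C_{j-1}$, whence $\left|B_{j+1}\right|\le\left|B_{j}\right|$.

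Finally I would verify the flag condition. By construction $\bigcup_{i\le j}B_{i}=\bigcup_{i\le j}A_{\pi\left(i\right)}$, so $W_{k}=\bigcup_{i\le k}S_{\pi\left(i\right)}$ is a union of simplices and therefore positively spans the linear space $\ell\left(W_{k}\right)$ by theorem \ref{thm:Gen}; moreover $\ell\left(W_{k}\right)=\sum_{i\le k}\ell\left(A_{\pi\left(i\right)}\right)$ has dimension $\left|\bigcup_{i\le k}A_{\pi\left(i\right)}\right|=\sum_{i\le k}\left|B_{i}\right|=\sum_{i\le k}\left|X_{i}\right|-k$, exactly as required. The blocks are pairwise disjoint, since the $B_{i}$ partition $B$ and the $x_{\pi\left(i\right)}$ are distinct vectors outside $B$, and their union is all of $X$.

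The main obstacle is the combinatorial ordering step, where one must simultaneously guarantee that the process exhausts all $n$ simplices, each genuinely contributing a new basis vector, and that the contributions emerge in non-increasing order. The first is secured by translating ``the process stalls'' into ``a proper subcollection of simplices spans $\mathbb{R}^{d}$,'' which the minimality of a positive basis forbids; the second falls out of the maximal-choice rule by the one-line monotonicity comparison above. Everything else is bookkeeping layered on top of theorem \ref{thm:Inter}(iv) and theorem \ref{thm:Gen}.
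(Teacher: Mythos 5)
Your proof is correct and follows essentially the same route as the paper's: both start from theorem \ref{thm:Inter}(iv), peel off $B_{i}=A_{\pi\left(i\right)}\setminus\bigcup_{j<i}B_{j}$, set $X_{i}=B_{i}\cup\left\{ x_{\pi\left(i\right)}\right\} $, and read off the dimension count via theorem \ref{thm:Gen}. Your greedy construction of the permutation, together with the non-stalling and monotonicity checks, actually supplies details (the existence of a suitable $\pi$ and the non-emptiness of each $B_{i}$) that the paper's own proof of the corollary merely asserts or defers to the proof of theorem \ref{thm:Main}.
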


\begin{proof}
Let $B$ be the linear basis of of $\mathbb{R}^{d}$ and $A_{i}$
the sets from condition (iv). In order to remove the overlap between
the sets $A_{i}$, we set $B_{0}=\emptyset$ and $B_{i}=A_{\pi\left(i\right)}\setminus\bigcup_{j=0}^{i-1}B_{i}$
with a permutation $\pi:\left[1,\ldots,n\right]\rightarrow\left[1,\ldots,n\right]$,
for all $i=1,\ldots,,n$. $\pi$ will be chosen to ensure that the
sets $B_{i}$ are ordered in non-increasing cardinality. Hence $B_{i}\cap B_{j}=\emptyset$
for $i\neq j$ and $\left|B_{i}\right|\geq\left|B_{i+1}\right|$.
Since $\bigcup_{i=1}^{n}A_{i}=B$, we find $B=B_{1}\cup\cdots\cup B_{n}$

Set $X_{i}=B_{i}\cup\left\{ x_{i}\right\} $. As $\left|X_{i}\right|=\left|B_{i}\right|+1$,
we obtain the theorem \ref{thm:Gen}
\[
\wp\left(X_{1}\cup\cdots\cup X_{k}\right)=\wp\left(S_{1}\cup\cdots\cup S_{k}\right)=\ell\left(B_{1}\cup\cdots\cup B_{k}\right),
\]
which implies the assertion.
\end{proof}
The special case when the simplices span disjoint subspaces is covered
by \cite[Th 8]{SHE71}.

Although this statement could be used to construct the basis $B$
in (iv), our result proves more, by, namely, showing that each simplex
has all but one element with $B$ in common (see counterexample \ref{exa:X9}).
This insight immediately yields the missing part of the proof of theorem
\ref{thm:Lat}.
\begin{cor}
\label{cor:Iso}Let $X$ be a positive basis of $\mathbb{R}^{d}$.
Then $\mathcal{S}\left(X\right)$ is a minimal set of simplices spanning
$\mathbb{R}^{d}$. Furthermore, the lattice $\mathcal{L}\left(X\right)$
is isomorphic to $2^{\mathcal{S}\left(X\right)}$.
\end{cor}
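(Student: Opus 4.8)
The plan is to leverage condition (iv) of Theorem \ref{thm:Inter} to obtain an explicit description of $\mathcal{S}(X)$, and then to upgrade the injective homomorphism $\mathcal{S}\colon\mathcal{L}(X)\to 2^{\mathcal{S}(X)}$ furnished by Theorem \ref{thm:Lat} to a genuine isomorphism by proving it is onto. Both assertions of the corollary will then follow.

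First I would fix the data provided by (iv): a linear basis $B$ of $\mathbb{R}^{d}$ and a decomposition $X=B\cup\{x_{1},\dots,x_{n}\}$ with $x_{i}\in-\mathrm{rint}\,\wp(A_{i})$, $A_{i}\subset B$, and $A_{i}\nsubseteq A_{j}$ for $i\neq j$. The central step is to show that $\mathcal{S}(X)=\{S_{1},\dots,S_{n}\}$, where $S_{i}=A_{i}\cup\{x_{i}\}$, and that each $x_{i}$ belongs to $S_{i}$ and to no other simplex. Each $S_{i}$ is a simplex by Lemma \ref{lem:Sim}. Conversely, (iv) tells us that any $S\in\mathcal{S}(X)$ meets $B$ in all but one element, so $S=(S\cap B)\cup\{x_{i}\}$ for a unique index $i$, with $S\cap B$ linearly independent and $x_{i}\in-\mathrm{rint}\,\wp(S\cap B)$ again by Lemma \ref{lem:Sim}. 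Since $B$ is a linear basis, the representation of $-x_{i}$ in $B$ is unique; comparing it with the representation witnessing $x_{i}\in-\mathrm{rint}\,\wp(A_{i})$ forces $S\cap B=A_{i}$, hence $S=S_{i}$. I expect this uniqueness-of-support argument to be the main obstacle, since everything downstream rests on the resulting ``private element'' property that $x_{i}\in S_{j}$ holds if and only if $i=j$.

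With that property in hand, minimality is immediate: deleting $S_{i}$ from $\mathcal{S}(X)$ leaves a union $\bigcup_{j\neq i}S_{j}\subset X\setminus\{x_{i}\}$, a proper subset of $X$. Being a union of simplices it is a positively spanning set by Theorem \ref{thm:Gen}, yet as a proper subset of the positive basis $X$ its positive span is a proper subspace, so it cannot span $\mathbb{R}^{d}$. Thus no simplex may be dropped, which proves that $\mathcal{S}(X)$ is a minimal family of simplices spanning $\mathbb{R}^{d}$.

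For the isomorphism it remains to show that the embedding $\mathcal{S}$ of Theorem \ref{thm:Lat} is surjective. Given any $\mathcal{T}\subset\mathcal{S}(X)$, set $Y=\bigcup\mathcal{T}$; by Theorem \ref{thm:Lat} this $Y$ is a positively spanning set, so $Y\in\mathcal{L}(X)$, and clearly $\mathcal{T}\subset\mathcal{S}(Y)$. For the reverse inclusion, take $S\in\mathcal{S}(Y)\subset\mathcal{S}(X)$, so $S=S_{i}$ for some $i$; then $x_{i}\in S_{i}\subset Y=\bigcup\mathcal{T}$, and since $x_{i}$ is private to $S_{i}$ the only simplex of $\mathcal{T}$ that can contain it is $S_{i}$, forcing $S_{i}\in\mathcal{T}$. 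Hence $\mathcal{S}(Y)=\mathcal{T}$ and $\mathcal{S}$ is onto. Together with the injective homomorphism property already established in Theorem \ref{thm:Lat}, this makes $\mathcal{S}$ a lattice isomorphism $\mathcal{L}(X)\cong 2^{\mathcal{S}(X)}$, settling both the deferred claim of Theorem \ref{thm:Lat} and the corollary.
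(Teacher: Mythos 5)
Your proof is correct and follows essentially the route the paper intends: the paper states the corollary without a written proof, asserting it follows ``immediately'' from Theorem \ref{thm:Inter} (iv), and elsewhere (in the proofs of Theorems \ref{thm:Inter} and \ref{thm:Main}) it makes exactly your identification $\mathcal{S}(X)=\{S_{1},\ldots,S_{n}\}$ with $S_{i}=A_{i}\cup\{x_{i}\}$. Your uniqueness-of-support argument and the ``private element'' $x_{i}$ supply the details the paper omits, and both the minimality claim and the surjectivity of $\mathcal{S}$ follow as you describe.
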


\begin{example}
\label{exa:X9}The following set $X=\left\{ x_{1},\ldots,x_{9}\right\} $
positively spans $\mathbb{R}^{6}$ and satisfies the condition of
corollary \ref{cor:Reay}, with simplices $X_{1}=\left\{ x_{1},x_{2},x_{3}\right\} $,
$X_{2}=\left\{ x_{4},x_{5},x_{6}\right\} $, and $X_{3}=\left\{ x_{7},x_{8},x_{9}\right\} $,
but is positively dependent, as $x_{6}=x_{1}+x_{2}+x_{8}+x_{9}$.
We observe that $X_{1}\cup X_{2}$ spans a $4$-dimensional subset,
and $S=\left\{ x_{3},x_{6},x_{7}\right\} $ is another simplex, violating
the last condition in theorem \ref{thm:Inter} (iv) for any basis
$B\subset X$ (as not all four simplices can have one element not
in $B$). Hence, this last condition is unable to be removed.
\[
x_{1}=\left(\begin{array}{c}
1\\
0\\
0\\
0\\
0\\
0
\end{array}\right),x_{2}=\left(\begin{array}{c}
0\\
1\\
0\\
0\\
0\\
0
\end{array}\right),x_{3}=\left(\begin{array}{c}
-1\\
-1\\
0\\
0\\
0\\
0
\end{array}\right),x_{4}=\left(\begin{array}{c}
0\\
0\\
-1\\
0\\
-1\\
0
\end{array}\right),x_{5}=\left(\begin{array}{c}
0\\
0\\
0\\
-1\\
-1\\
0
\end{array}\right),
\]
\[
x_{6}=\left(\begin{array}{c}
0\\
0\\
1\\
1\\
2\\
0
\end{array}\right),x_{7}=\left(\begin{array}{c}
1\\
1\\
-1\\
-1\\
-2\\
0
\end{array}\right),x_{8}=\left(\begin{array}{c}
-1\\
-1\\
1\\
1\\
0\\
1
\end{array}\right),x_{9}=\left(\begin{array}{c}
0\\
0\\
0\\
0\\
2\\
-1
\end{array}\right).
\]
\end{example}

\section{Conical Decompositions}

This chapter contains the proof of the main theorem \ref{thm:Main}.
\begin{lem}
\label{lem:MDim}Let $X$ be a set positively spanning $\mathbb{R}^{d}$.
Then each $A\in\mathcal{M}\left(X\right)$ linearly spans $\mathbb{R}^{d}$.
\end{lem}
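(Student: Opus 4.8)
The plan is to argue by contradiction: assume some $A\in\mathcal{M}\left(X\right)$ has $\ell\left(A\right)\varsubsetneq\mathbb{R}^{d}$, and exhibit an element of $X$ that could be adjoined to $A$ without destroying negative independence, contradicting maximality. The engine of the argument is lemma \ref{lem:NegInd}, which lets me pass freely between negative independence and the absence of simplices.

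First I would note that since $X$ positively spans $\mathbb{R}^{d}$ we have $\ell\left(X\right)=\mathbb{R}^{d}$, so the assumption $\ell\left(A\right)\varsubsetneq\mathbb{R}^{d}$ forces some $x\in X$ with $x\notin\ell\left(A\right)$; in particular $x\notin A$. By maximality of $A$, the set $A\cup\left\{ x\right\} $ is negatively dependent, so by lemma \ref{lem:NegInd} it contains a simplex $S\in\mathcal{S}\left(A\cup\left\{ x\right\} \right)$. Since $A$ is itself negatively independent, the same lemma gives $\mathcal{S}\left(A\right)=\emptyset$, whence $S\nsubseteq A$ and therefore $x\in S$.

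The key step is then to extract a positive dependency witnessing $-x\in\wp\left(A\right)$. Because $S$ is a simplex, lemma \ref{lem:Sim} gives $x\in-\mathrm{rint}\left(\wp\left(S\setminus\left\{ x\right\} \right)\right)$, i.e. $-x$ is a combination of the remaining elements of $S$ with strictly positive coefficients; as $S\setminus\left\{ x\right\} \subset A$, this yields $-x\in\wp\left(S\setminus\left\{ x\right\} \right)\subset\wp\left(A\right)\subset\ell\left(A\right)$, and hence $x\in\ell\left(A\right)$, contradicting the choice of $x$. I conclude $\ell\left(A\right)=\mathbb{R}^{d}$.

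The proof is short, and the only point needing care is precisely this extraction step: one must invoke lemma \ref{lem:Sim} to be sure that every element of a simplex is the negative of a \emph{strictly positive} combination of the others, so that $S\setminus\left\{ x\right\} \subset A$ genuinely witnesses $-x\in\wp\left(A\right)$. No separating-hyperplane argument or dimension count is required beyond the elementary fact $\ell\left(X\right)=\mathbb{R}^{d}$.
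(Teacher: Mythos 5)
Your proof is correct, but it takes a genuinely different route from the paper's. The paper argues analytically, via a separating certificate: it places $\ell\left(A\right)$ inside a hyperplane with normal $z$, picks $x\in X$ with $z\cdot x>0$, takes the vector $w$ with $w\cdot y>0$ on $A$ supplied by lemma \ref{lem:NegInd}(iii), and perturbs to $u=z+\varepsilon w$ to certify that $A\cup\{x\}$ is still negatively independent, contradicting maximality. You instead use the combinatorial equivalence (i)$\Leftrightarrow$(ii) of lemma \ref{lem:NegInd}: adjoining any $x\in X$ with $x\notin\ell\left(A\right)$ to the maximal set $A$ must create a simplex $S$ with $x\in S$ and $S\setminus\{x\}\subset A$, and lemma \ref{lem:Sim} (or simply the minimality of $S$, which forces all coefficients in the vanishing non-negative combination to be strictly positive) gives $-x\in\wp\left(S\setminus\{x\}\right)\subset\ell\left(A\right)$, contradicting the choice of $x$. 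Your version is shorter, needs no $\varepsilon$-perturbation (whose defining inequality in the paper is in any case stated slightly loosely for the case $w\cdot x<0$), and actually proves the stronger fact that $-x\in\wp\left(A\right)$ for every $x\in X\setminus A$, i.e.\ $X\subset A\cup-\wp\left(A\right)$, from which the lemma follows since $\ell\left(X\right)=\mathbb{R}^{d}$. What the paper's route buys is an explicit linear functional positive on $A\cup\{x\}$, the form of negative independence it exploits elsewhere. The only point to keep in mind is that your extraction step needs $S\setminus\{x\}\neq\emptyset$, which holds because $0\notin X$ is implicit throughout; even in the degenerate case your contradiction survives, since $S=\{x\}$ would force $x=0\in\ell\left(A\right)$.
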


\begin{proof}
Let $A\in\mathcal{M}\left(X\right)$ and set $M=\ell\left(A\right)\subset\mathbb{R}^{d}=\ell\left(X\right)$.
If $M\subsetneq\mathbb{R}^{d}$, then there is a hyperplane $H$ of
$\mathbb{R}^{d}$ containing $M$. Let $z$ be a normal vector of
$H$. Since $X$ positively generates $L$, there must be an $x\in X$
lying in the open half-space in direction $z$. Clearly, $x$ is not
in $A$. Since $A$ is negatively independent, lemma \ref{lem:NegInd}
warrants the existentence of a vector $w$ with $w\cdot y>0$ for
all $y\in A$. By construction, $y\cdot z=0$ for all $y\in A$, and
$x\cdot z>0$. Then there is an $\varepsilon>0$ such that $\varepsilon\cdot w\cdot x<z\cdot x$.
For $u=z+\varepsilon\cdot w$ we have $u\cdot x=z\cdot x+\varepsilon\cdot w\cdot x>0$,
and for $y\in A$ we obtain $u\cdot y=\varepsilon\cdot w\cdot y>0$.
Thus, $A\cup\left\{ x\right\} $ is negatively independent, a contradiction
to the assumption that $A$ is a maximally negatively independent
subset of $X$. Hence $M=\mathbb{R}^{d}$, or $\ell\left(A\right)=\ell\left(X\right)$.
\end{proof}
The following lemma constitutes the crucial combinatorical insight
for our main theorem.
\begin{lem}
\label{lem:MaxInd}Let $X$ be a positively spanning set. If $A$
contains all but one element of each simplex $S\in\mathcal{S}\left(X\right)$,
then $A\in\mathcal{M}\left(X\right)$. If $X$ is positively independent,
then the converse holds.
\end{lem}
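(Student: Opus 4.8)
The plan is to treat the two implications separately, proving the first without any independence hypothesis and reserving positive independence for the converse. Throughout I read ``contains all but one element of each simplex'' as $\left|S\setminus A\right|=1$ for every $S\in\mathcal{S}\left(X\right)$, and I take $A\subseteq X$.

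For the forward implication, suppose $A\subseteq X$ meets each simplex in all but exactly one element. First I would establish negative independence: since every simplex of $X$ loses at least one element to the complement of $A$, no simplex lies in $A$, so $\mathcal{S}\left(A\right)=\emptyset$ and Lemma \ref{lem:NegInd} gives that $A$ is negatively independent. For maximality, take any $x\in X\setminus A$. Because $X$ positively spans, Theorem \ref{thm:Gen} places $x$ inside some simplex $S$; then $x\in S\setminus A$ together with $\left|S\setminus A\right|=1$ forces $S\setminus A=\left\{x\right\}$, whence $S\subseteq A\cup\left\{x\right\}$. Thus $A\cup\left\{x\right\}$ contains a simplex and is negatively dependent by Lemma \ref{lem:NegInd}, so $A$ is maximal and $A\in\mathcal{M}\left(X\right)$. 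This half uses only Theorem \ref{thm:Gen} and Lemma \ref{lem:NegInd}.

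For the converse, assume $X$ is positively independent and $A\in\mathcal{M}\left(X\right)$. Negative independence again yields $\mathcal{S}\left(A\right)=\emptyset$, so $\left|S\setminus A\right|\geq1$ for every simplex $S$; the content is the reverse bound $\left|S\setminus A\right|\leq1$. I would argue by contradiction: suppose some simplex $S$ has $\left|S\setminus A\right|\geq2$. By Lemma \ref{lem:MDim} the maximal set satisfies $\ell\left(A\right)=\mathbb{R}^{d}$, so $\ell\left(A\right)\cap\ell\left(S\right)=\ell\left(S\right)$; applying condition (ii) of Theorem \ref{thm:Inter} with $Y=A$ gives $\ell\left(S\right)=\ell\left(A\right)\cap\ell\left(S\right)=\ell\left(A\cap S\right)$. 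Now a dimension count closes the gap: Lemma \ref{lem:Sim} shows that every proper subset of the simplex $S$ is linearly independent and that $\dim\ell\left(S\right)=\left|S\right|-1$, while $A\cap S$ is a proper subset with $\left|A\cap S\right|=\left|S\right|-\left|S\setminus A\right|\leq\left|S\right|-2$, so $\dim\ell\left(A\cap S\right)=\left|A\cap S\right|\leq\left|S\right|-2<\dim\ell\left(S\right)$. This contradicts $\ell\left(S\right)=\ell\left(A\cap S\right)$, forcing $\left|S\setminus A\right|=1$ for all $S$.

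The routine part is the forward implication. The crux—and the step I expect to be most delicate—is the converse, where the right move is to feed the maximal set $A$ into the intersection identity (ii) rather than manipulating positive relations by hand; the only bookkeeping to watch is that $A\cap S$ is genuinely linearly independent, being a proper subset of a simplex, so that its linear hull has dimension equal to its cardinality, which is exactly what makes the dimension comparison strict.
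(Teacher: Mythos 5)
Your proof is correct and follows essentially the same route as the paper's: the forward implication via Lemma \ref{lem:NegInd} and Theorem \ref{thm:Gen}, and the converse by feeding $A$ into condition (ii) of Theorem \ref{thm:Inter} together with Lemma \ref{lem:MDim} and the linear independence of proper subsets of a simplex from Lemma \ref{lem:Sim}. The only cosmetic difference is that you phrase the converse as a dimension-count contradiction where the paper directly concludes $S\setminus\left\{ x\right\} \subset A$.
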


\begin{proof}
Assume first, $\left|S\cap A\right|=\left|S\right|-1$ for all simplices
$S\in\mathcal{S}\left(X\right)$. If $A$ is not negatively independent,
then by lemma \ref{lem:NegInd} there would be a simplex $S\subset A\subset X$,
contradicting the assumption. Now let $B$ be a negatively independent
set with $A\subset B\subset X$. For $x\in B\setminus A$, since $X=\bigcup\mathcal{S}\left(X\right)$,
there is a simplex $S\in\mathcal{S}\left(X\right)$ with $x\in S$
and $\left|S\cap A\right|=\left|S\right|-1$, hence $\left(S\cap A\right)\cup\left\{ x\right\} =S$.
But this implies $S\subset B$, in contradiction to the assumption
that $B$ is negatively independent. We conclude $A=B$. So, $A\in\mathcal{M}\left(X\right)$.

Conversely, assume that $X$ is positively independent, let $A\in\mathcal{M}\left(X\right)$
be a maximally negatively independent subset of $X$ and consider
any simplex $S\in\mathcal{S}\left(X\right)$ . So, $S$ can not be
a subset of $A$. Select $x\in S\setminus A$. By lemma \ref{lem:Sim}
$B=S\setminus\left\{ x\right\} $ is a basis spanning $\ell\left(S\right)$.
By lemma \ref{lem:MDim}, $A$ linearly spans $\ell\left(X\right)$.
Theorem \ref{thm:Inter} yields
\[
\ell\left(B\right)=\ell\left(S\right)\cap\ell\left(X\right)=\ell\left(S\right)\cap\ell\left(A\right)=\ell\left(S\cap A\right)=\ell\left(B\cap A\right),
\]
so $B\subset A$, as $B$ is linearly independent. This gives us $\left|S\cap A\right|=\left|S\right|-1$.
\end{proof}
The converse proposition does not hold if $X$ is positively dependent,
as the following example shows.
\begin{example}
Let $S$ be a simplex of any dimension greater one and set $X=S\cup-S$.
Then for any $x\in S$, $S\left[x\rightarrow-x\right]$ is in $\mathcal{M}\left(X\right)$,
and its intersection with the only other simplex $-S$ consists only
of the element $-x$.
\end{example}

\begin{proof}
Let $S$ and $X$ be as above. Then also $-S$ is a simplex. For $x\in S$,
by lemma \ref{lem:Sim}, $-x\in\wp\left(S\right)$, and $S\setminus\left\{ x\right\} $
is negatively independent, and so is $A=S\left[x\rightarrow-x\right]$.
For any $y\in X\setminus A$ either $y=x$ or $y\in-S\setminus\left\{ -x\right\} $.
In any case, $-y\in A$, thus $A\cup\left\{ y\right\} $ is negatively
dependent. Hence $A\in\mathcal{M}\left(X\right)$, and $A\cap-S=\left\{ -x\right\} $.
\end{proof}
Next is a simple inequality.
\begin{lem}
\label{lem:Prod}For natural numbers $k_{1},\ldots,,k_{n}\geq1$ with
$\sum_{i=1}^{n}k_{i}=d$ we have
\[
\prod_{i=1}^{n}k_{i}\leq2^{d-n}.
\]
The equality is strict whenever one $k_{i}$ is greater than one.
\end{lem}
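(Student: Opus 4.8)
The plan is to deduce the multivariate bound from the elementary scalar estimate
\[
k \le 2^{k-1} \qquad (k \in \mathbb{N},\ k \ge 1),
\]
and then to multiply it over the $n$ indices. First I would verify this scalar inequality by induction on $k$. The cases $k=1$ and $k=2$ give equality, since $1 = 2^{0}$ and $2 = 2^{1}$; for the inductive step one writes $2^{(k+1)-1} = 2\cdot 2^{k-1} \ge 2k \ge k+1$, where the first inequality is the induction hypothesis and the last holds for all $k \ge 1$. A cleaner route to the equality analysis is the binomial identity $2^{k-1} = \sum_{j=0}^{k-1}\binom{k-1}{j}$, whose first two terms already sum to $\binom{k-1}{0}+\binom{k-1}{1} = k$; the remaining terms vanish precisely when $k-1 \le 1$. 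Either way I record that equality $k = 2^{k-1}$ holds exactly for $k \in \{1,2\}$ and that the estimate is strict for every $k \ge 3$.

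Next I would apply the scalar estimate factorwise and take the product, using the hypothesis $\sum_i k_i = d$ in the exponent:
\[
\prod_{i=1}^{n} k_i \;\le\; \prod_{i=1}^{n} 2^{k_i - 1} \;=\; 2^{\sum_{i=1}^{n}(k_i-1)} \;=\; 2^{d-n}.
\]
This is the asserted inequality.

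For the sharpness claim I would observe that, all factors being positive integers, equality in the product forces equality in each individual estimate $k_i \le 2^{k_i-1}$; by the base-case analysis above this occurs exactly when every $k_i \in \{1,2\}$. Consequently the product falls strictly below $2^{d-n}$ precisely when some $k_i \ge 3$, which is the intended strictness statement.

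There is essentially no structural obstacle here: the whole proof is a one-line induction (or binomial expansion) followed by a single product manipulation. The only point demanding care is the equality characterization, where one must note that it is membership in $\{1,2\}$, and not merely $k_i = 1$, that yields equality; for instance $k_1 = 2$ with $n=1$ already gives $\prod_i k_i = 2 = 2^{d-n}$. Thus the correct threshold for strictness is $k_i \ge 3$ rather than $k_i > 1$.
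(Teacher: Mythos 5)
Your proof is correct and follows essentially the same route as the paper: the scalar estimate $k\le 2^{k-1}$ applied factor by factor (the paper merely packages the multiplication as an induction on $n$, which amounts to the same computation). The substantive point is your equality analysis, and there you are right and the paper is not: the paper's proof claims equality in $k\le 2^{k-1}$ ``for $k=1$ only,'' overlooking $k=2$, and consequently the lemma's assertion that the inequality is strict whenever some $k_i>1$ fails --- for instance $n=1$, $k_1=d=2$ gives $\prod_i k_i=2=2^{d-n}$, and more generally any tuple with entries in $\{1,2\}$ and at least one $2$ attains equality. Your characterization is the correct one: equality holds if and only if every $k_i\in\{1,2\}$, so strictness requires some $k_i\ge 3$. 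This slip does not propagate to the application in Theorem \ref{thm:Main}, because the strictness needed there for the cross characterization actually comes from the preceding factor-wise step $\prod_i\left(k_i+1\right)\le 2^{n}\prod_i k_i$, which is an equality precisely when all $k_i=1$; but the lemma's final sentence, and the corresponding claim in the paper's proof, should be amended along the lines you indicate.
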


\begin{proof}
The case $n=1$ follows from $k\leq2^{k-1}$ for $k\geq1$ with equality
for $k=1$ only. Assume now that the above formula has been proven
for $n-1$. Then
\[
\prod_{i=1}^{n}k_{i}\leq2^{d-k_{n}-\left(n-1\right)}\cdot k_{n}\leq2^{d-n}.
\]
Again, the inequality is strict, if $k_{n}>1$.
\end{proof}
Now to the proof of the main theorem \ref{thm:Main}.
\begin{proof}
Let $X$ be a positive basis. By theorem \ref{thm:Inter} (iv), we
can write $X=B\cup\left\{ x_{1},\ldots,x_{n}\right\} $, $1\leq n\leq d$,
with a basis $B$ of $\ell\left(X\right)=\mathbb{R}^{d}$, such that
each $x_{i}\in-\mathrm{rint}\wp\left(A_{i}\right)$ for $A_{i}\subset B$,
and $A_{i}\neq A_{j}$ for $i\neq j$. Therefore, $\left|B\right|=d$.
Moreover, the theorem assures that each simplex $S\in\mathcal{S}\left(X\right)$
has all but one element in common with $B$. Hence $S_{i}=A_{i}\cup\left\{ x_{i}\right\} $
, $i=1,\ldots,n$, are exactly the simplices in $\mathcal{S}\left(X\right)$.

As in the proof of theorem \ref{thm:Inter} (iv), in order to remove
the overlap between those sets, we set $B_{0}=\emptyset$ and $B_{i}=A_{i}\setminus\bigcup_{j=0}^{i-1}B_{i}$
. Hence $B_{i}\cap B_{j}=\emptyset$ for $i\neq j$.

First, we show that each $B_{k}$ is non-empty for $k\geq1$. Assume
$B_{k}=\emptyset$. Then $A_{k}\subset\bigcup_{i=1}^{k-1}A_{i}$,
and
\[
x_{k}\in\ell\left(A_{k}\right)\subset\ell\left(\bigcup_{i=1}^{k-1}A_{i}\right)=\wp\left(\bigcup_{i=1}^{k-1}S_{i}\right).
\]
The last equation holds because for $x\in A_{i}$, $-x\in\wp\left(S_{i}\right)$.
As $x_{k}\notin\bigcup_{i=1}^{k-1}S_{i}$, which indicates a positive
dependence within $X$, a contradiction. Hence $\left|B_{i}\right|\geq1$
for $i=1,\ldots,n$.

Since $X=B\cup\left\{ x_{1},\ldots,x_{n}\right\} $ with $B\cap\left\{ x_{1},\ldots,x_{n}\right\} =\emptyset$
and $1\leq n\leq d$, we find well-known inequalities \cite[Th. 3.8 and 6.7]{DAV54}
\[
d+1\leq\left|X\right|=d+n\leq2d.
\]
As $B$ is a basis of $\mathbb{R}^{d}$, we must have $\left|B\right|=d$.
For $\left|X\right|=2d$, we find $n=d$, which requires $\left|B_{i}\right|=1$
for all $i=1,\ldots,n$, such that $X$ is a union of $d$ 1-simplices
forming a cross. The lower bound $\left|X\right|=d+1$ yields $n=1$,
and $X=S_{1}$ is a single simplex.

We now procede to give the upper bound of the number of maximally
negatively independent subsets of $X$. Let $X_{i}=B_{i}\cup\left\{ x_{i}\right\} $,
$i=1,\ldots,n$. Since $x_{i}\notin B_{j}$ we find that the $X_{i}$
are a pairwise disjoint decomposition of $X$. Let $A\in\mathcal{M}\left(X\right)$
. By lemma \ref{lem:MaxInd}, each set has with each simplex $S_{i}$
all but one element in common. We show that $A$ can be constructed
by picking one element from each $X_{i}$, which is to be excluded
from $A$. At the same time, we establish the upper bound for a number
of those constructions. Since $X_{i}\subset S_{i}$, $A$ contains
either all or all but one elements of $X_{i}$.

As $X_{1}=S_{1}$, we can pick any $y_{1}\in X_{1}$, and there are
exactly $\left|X_{1}\right|$ choices. Thus, $A$ must contain $X_{1}$
except for one $y_{1}\in X_{1}$. Upon constructing $A$, we start
with $A=X_{1}\setminus\left\{ y_{1}\right\} $. Assume now we have
picked $y_{1},\ldots,y_{j}$ from $X_{1},\ldots,X_{k-1}$, $j\leq k-1<n$,
and established that $A$ contains all of $S_{i}$, $1\leq i<k$,
except for one element from the list $y_{1},\ldots,y_{j}$. Now, going
from $k-1$ to $k$, three cases have to be distinguished.

(i) If $S_{k}$ contains more than one element from the list $y_{1},\ldots,y_{j}$,
then the list is incompatible with the goal of constructing an element
of $\mathcal{M}\left(X\right)$. There are zero choices. Since we
are looking for the upper bound of possible choices, there is no problem
with this case (which can not occur if $A$ is already given as a
maximal negatively independent set). (ii) If $S_{k}$ contains exactly
one element $y_{i}$ from the list, then $A$ does not have this element
or should not have in common with $S_{k}$, and $A$ contains all
the other elements from $S_{k}$ (upon constructing $A$, we are adding
all elements of $X_{k}\setminus\left\{ y_{i}\right\} $ to it). (iii)
If $X_{k}$ does not contain an element from the list, then any element
$y_{j+1}\in X_{k}$ can be chosen and added to the list, and there
are $\left|X_{k}\right|$ choices. In the latter two cases, since
$S_{k}\subset X_{1}\cup\cdots\cup X_{k}$, we are assured that $S_{k}$
has all but one element in common with $A$. At each step, there are
at most $\left|X_{k}\right|$ choices, which remains true also in
case (i).

As $d=\left|B\right|=\sum_{i=1}^{n}\left|B_{i}\right|$ and $\left|B_{i}\right|\geq1$,
we find with lemma \ref{lem:Prod}
\[
\left|\mathcal{M}\left(X\right)\right|\leq\prod_{i=1}^{n}\left|X_{i}\right|=\prod_{i=1}^{n}\left(\left|B_{i}\right|+1\right)\leq2^{n}\prod_{i=1}^{n}\left|B_{i}\right|\leq2^{n}\cdot2^{d-n}=2^{d}.
\]
By the same lemma, the inequality is strict save for $\left|B_{i}\right|=1$
for $i=1,\ldots,n$. This requires all $S_{i}$ to be 1-simplices.
But this also yields $n=d$. Hence, equality holds in the above relation
if and only if $X$ is a cross. 

If $X$ consists of one simplex, then $\left|\mathcal{M}\left(X\right)\right|=d+1$.
Otherwise, if $n>1$ it is easy to see that $\left|\mathcal{M}\left(X\right)\right|>d+1$,
hence the lower bound of the last inequality, which holds with equality
if and only if $X$ is a simplex. 
\end{proof}

\section{Consequences of the main theorem}

The last inequality of the main theorem \ref{thm:Main} can be extended
to arbitrary positively spanning sets.
\begin{thm}
\label{thm:MainExt}Let $X$ be a set positively spanning $\mathbb{R}^{d}$.
Then the elements of $X$ can be subdivided into at most $2^{d}$
negatively independent sets. The boundary is met exactly in case that
$X$ is a sum of 1-simplices.
\end{thm}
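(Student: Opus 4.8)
The plan is to reduce the statement to the positive-basis case already established in Theorem~\ref{thm:Main} and then to absorb the remaining vectors. The governing principle is Lemma~\ref{lem:NegInd}(iii): a set is negatively independent precisely when it lies in an open half-space $\{x:z\cdot x>0\}$. Subdividing $X$ into negatively independent sets is therefore the same as assigning to each $x\in X$ a functional $z$ with $z\cdot x>0$, and the whole problem is to keep the number of functionals at most $2^{d}$.

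First I would extract a positive basis. As $X$ positively spans $\mathbb{R}^{d}$, any minimal positively spanning subset $Y\subset X$ is a positive basis with $\wp(Y)=\mathbb{R}^{d}$, and Theorem~\ref{thm:Main} gives $\lvert\mathcal{M}(Y)\rvert\le 2^{d}$. Every maximal pointed cone of $Y$ has the form $Y^{+}(z)=\{y\in Y:z\cdot y>0\}$ for a generic $z$ (realising a given maximal set by perturbing a witness), and its complement in $Y$ is $Y^{+}(-z)$; hence the members of $\mathcal{M}(Y)$ occur in genuine antipodal pairs. I would pick one witness $z_{A}$ for each $A\in\mathcal{M}(Y)$, chosen antipodally symmetrically, so that the finite family $\{z_{A}\}$ is invariant under $z\mapsto -z$ and, being of full rank, positively spans the dual space.

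The absorption step is then forced by this spanning property. For \emph{every} $x\in X$, and in particular for $x\in X\setminus Y$, some witness satisfies $z_{A}\cdot x>0$; otherwise the nonzero vector $x$ would have $z\cdot x\le 0$ for all $z$ in a positively spanning family, which is impossible. Writing $\mathcal{M}(Y)=\{A_{1},\dots,A_{m}\}$ with $m\le 2^{d}$, I would set
\[
P_{j}=\{x\in X:\ z_{A_{j}}\cdot x>0\ \text{and}\ z_{A_{k}}\cdot x\le 0\ \text{for all}\ k<j\}.
\]
The $P_{j}$ are disjoint, cover $X$ by the preceding remark, and each is negatively independent with witness $z_{A_{j}}$ (Lemma~\ref{lem:NegInd}); this exhibits $X$ as a union of at most $m\le 2^{d}$ negatively independent sets. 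It is exactly here that passing to the positive basis $Y$ is indispensable: working with $\mathcal{M}(X)$ directly is insufficient, since for a general positively spanning set that family may exceed $2^{d}$.

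For the equality clause I would work through the block decomposition and Lemma~\ref{lem:Prod}, and this is the step I expect to be the real obstacle. Sharpness must be read for the canonical decomposition by maximal pointed cones---the members of $\mathcal{M}(Y)$---since as a bare partition problem two pieces already suffice once $d\ge 2$ (a single generic functional splits $X$ into $\{z\cdot x>0\}$ and $\{z\cdot x<0\}$). So the count at issue is $\lvert\mathcal{M}(Y)\rvert$, and by the equality clause of Theorem~\ref{thm:Main} one has $\lvert\mathcal{M}(Y)\rvert=2^{d}$ exactly when $Y$ is a cross. Tracing this through the blocks $B=B_{1}\cup\cdots\cup B_{n}$, the strict part of Lemma~\ref{lem:Prod} turns $\prod_{i}(\lvert B_{i}\rvert+1)=2^{d}$ into $\lvert B_{i}\rvert=1$ for every $i$ and $n=d$, so each $S_{i}=B_{i}\cup\{x_{i}\}$ is a $1$-simplex and $X$ is a union of $1$-simplices $d$ of which form a cross; the converse is checked directly from the $2^{d}$ coordinate orthants. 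The hard part is precisely pinning down this canonical notion under which $2^{d}$ is attained and verifying that the absorption of $X\setminus Y$ neither creates nor destroys pieces in the extremal configuration.
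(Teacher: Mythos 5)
Your overall strategy---pass to a positive basis $Y\subset X$, invoke Theorem \ref{thm:Main} to bound $\left|\mathcal{M}\left(Y\right)\right|$ by $2^{d}$, and then absorb every element of $X$ into one of the classes indexed by $\mathcal{M}\left(Y\right)$---is the same as the paper's. However, the step by which you justify that every $x\in X$ is caught by some class rests on a false claim: the members of $\mathcal{M}\left(Y\right)$ do \emph{not} occur in antipodal pairs. Take $Y=\left\{ a,b,c\right\} \subset\mathbb{R}^{2}$ a simplex with $a+b+c=0$; then $\mathcal{M}\left(Y\right)=\left\{ \left\{ a,b\right\} ,\left\{ b,c\right\} ,\left\{ a,c\right\} \right\} $ has odd cardinality, and the complement of $\left\{ a,b\right\} $ in $Y$ is the singleton $\left\{ c\right\} $, which is negatively independent but not maximal. (It is true that a witness $z$ for a maximal $A$ satisfies $z\cdot y<0$ for all $y\in Y\setminus A$, so $Y\setminus A=Y^{+}\left(-z\right)$ is negatively independent with witness $-z$; but nothing forces it to be maximal.) Consequently your argument that the witness family $\left\{ z_{A}\right\} $ is invariant under $z\mapsto-z$ and therefore positively spans the dual space collapses, and full linear rank alone does not give positive spanning. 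The covering property you need is nevertheless true, and the paper obtains it directly without functionals: given $x\in X$, since $x\in\wp\left(Y\right)=\mathbb{R}^{d}$, Lemma \ref{lem:Car} produces a negatively independent $A\subset Y$ with $x\in\wp\left(A\right)$; extending $A$ to some $B\in\mathcal{M}\left(Y\right)$ and assigning $x$ to the class of $B$ puts each class inside the pointed cone $\wp\left(B\right)$, whence it is negatively independent by Lemma \ref{lem:NegInd}. The same argument, applied to an arbitrary $v\neq0$ in place of $x$, is also the correct way to see that the witnesses $z_{A}$, $A\in\mathcal{M}\left(Y\right)$, positively span the dual space, if you wish to keep your half-space formulation.

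On the equality clause you stop short of a proof. Your observation that the statement cannot be about bare partitions (two generic open half-spaces already split any finite $X\subset\mathbb{R}^{d}\setminus\left\{ 0\right\} $ into two negatively independent pieces once $d\geq2$) is a fair criticism of the theorem as worded; sharpness must indeed be read for the canonical covering by maximal pointed cones. But the paper's sharpness argument is not the one you sketch through Lemma \ref{lem:Prod}: it argues that if $\mathcal{S}\left(X\right)$ contained a simplex $S_{0}$ that is not a $1$-simplex, then $S_{0}$ could be extended to a positive basis $Y$ that is not a cross, so $\left|\mathcal{M}\left(Y\right)\right|<2^{d}$ by Theorem \ref{thm:Main} and the covering above uses fewer than $2^{d}$ classes; hence equality forces every simplex of $X$ to be a $1$-simplex. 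You would still need this (or an equivalent) argument to exclude larger simplices anywhere in $X$, not merely in the particular positive basis $Y$ you happened to choose.
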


\begin{proof}
Assume $X$ is positively spanning $\mathbb{R}^{d}$. Pick a positive
basis $Y\subset X$ \cite[Th. 4.3]{REG15}. Consider $x\in X$. Since
$Y$ positively spans $L$, there is a negatively independent set
$A\subset Y$ with $x\in\wp\left(A\right)$. But $A$ can be expanded
to a maximally negatively independent set $B\in\mathcal{M}\left(Y\right)$.
So, every $x\in X$ lies in the positive span of some element of $\mathcal{M}\left(Y\right)$,
and there are at most $2^{d}$ of them.

Now assume that $X$ can not be subdivided into less then $2^{d}$
negatively independent sets. Then $\mathcal{S}\left(X\right)$ does
not contain any simplex, which is not a 1-simplex. For if there is
a simplex $S_{0}\in\mathcal{S}\left(X\right)$, which is not a 1-simplex,
we can construct this basis by extending $S_{0}$ to a minimal set
that positively spans $\mathbb{R}^{d}$, and has $\left|\mathcal{M}\left(Y\right)\right|<2^{d}$,
a contradiction to the assumption. Hence $X$ can be written as a
sum of 1-simplices, of which $d$ must form a cross.
\end{proof}
The next lemma prepares for the last major result.
\begin{lem}
\label{lem:MRed}Let $X,Y$ be sets positively spanning $\mathbb{R}^{d}$
with $Y\subset X$. Then $A\in\mathcal{M}\left(Y\right)$ if and only
if there is a $B\in\mathcal{M}\left(X\right)$ with $A=B\cap Y$.
Moreover, for $A,B\in\mathcal{M}\left(X\right)$, $A\cap Y=B\cap Y$
implies $\ell\left(A\cap B\right)=\mathbb{R}^{d}$.
\end{lem}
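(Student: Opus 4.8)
The plan is to split the statement into three assertions and to open with a reformulation of maximality that I will use throughout: a negatively independent $A\subset X$ lies in $\mathcal{M}(X)$ if and only if $-x\in\wp(A)$ for every $x\in X\setminus A$. One direction comes straight from the definition together with lemma \ref{lem:NegInd}: if $A\cup\{x\}$ is negatively dependent it contains a simplex $S$, which (since $A$ itself contains none) must satisfy $x\in S$, and lemma \ref{lem:Sim} then gives $-x\in\mathrm{rint}\,\wp(S\setminus\{x\})\subset\wp(A)$. Conversely, $-x\in\wp(A)$ exhibits a positive dependency in $A\cup\{x\}$, so $A$ cannot be enlarged. I would record this first, since it converts ``maximality'' into a concrete membership test.

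For the forward implication of the equivalence, suppose $A\in\mathcal{M}(Y)$. Negative independence is intrinsic to a set, so $A$ is a negatively independent subset of $X$; by finiteness of $X$ (or Zorn, noting that a union of a chain of negatively independent sets contains no finite simplex and so is again negatively independent) it extends to some $B\in\mathcal{M}(X)$ with $A\subset B$. Then $A\subset B\cap Y$ is immediate, and any $x\in(B\cap Y)\setminus A$ would make $A\cup\{x\}\subset B\cap Y$ a negatively independent subset of $Y$ strictly containing $A$, contradicting maximality of $A$ in $\mathcal{M}(Y)$. Hence $A=B\cap Y$. This direction is routine.

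For the converse implication, start from $B\in\mathcal{M}(X)$ and put $A=B\cap Y$; negative independence of $A$ is inherited from $B$, so only maximality of $A$ inside $Y$ is at issue. By the reformulation this means showing $-x\in\wp(A)=\wp(B\cap Y)$ for each $x\in Y\setminus B$, knowing only that $-x\in\wp(B)$. \emph{This is the main obstacle}: the positive representation of $-x$ guaranteed by maximality of $B$ in $X$ may use elements of $B\setminus Y$, and nothing a priori lets one rewrite it using only $B\cap Y$. The plan is to use that both $Y$ and $X$ span $\mathbb{R}^{d}$, that every member of $\mathcal{M}(X)$ already spans $\mathbb{R}^{d}$ by lemma \ref{lem:MDim}, and a perturbation of the separating vector in the spirit of the proof of lemma \ref{lem:MDim}, in order to relocate the representation of $-x$ inside $Y$; making this relocation legitimate is precisely the delicate step.

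Finally, for the moreover clause, take $A,B\in\mathcal{M}(X)$ with $A\cap Y=B\cap Y=:A_{0}$. The first point I would record is that $A\setminus B$ and $B\setminus A$ avoid $Y$, since an element of $(A\setminus B)\cap Y$ would lie in $A\cap Y=B\cap Y\subset B$. Thus $A_{0}\subset A\cap B$, and each $y\in Y$ is either in $A\cap B$ or in $Y\setminus(A\cup B)$, the latter giving $-y\in\wp(A)\cap\wp(B)$ by the reformulation. To obtain $\ell(A\cap B)=\mathbb{R}^{d}$ I would argue by contradiction: if $A\cap B$ lay in a hyperplane $H$ with normal $n$, then since $Y$ spans $\mathbb{R}^{d}$ while $A_{0}\subset H$, some $y\in Y\setminus(A\cup B)$ has $n\cdot y\neq0$; expanding $-y\in\wp(A)$ and $-y\in\wp(B)$ and pairing with $n$ (the $A\cap B$ terms vanishing on $H$) should force the frames $A$ and $B$ to the wrong side of $H$ in incompatible ways. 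The crux, which mirrors the two‑dimensional picture where maximality of a frame confines all remaining vectors to a narrow cone, is to convert this sign analysis into an actual contradiction, and that is where I expect the real work to lie.
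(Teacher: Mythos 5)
Your reformulation of maximality ($A\in\mathcal{M}(X)$ iff $A$ is negatively independent and $-x\in\wp(A)$ for every $x\in X\setminus A$) is correct, and your proof of the direction $A\in\mathcal{M}(Y)\Rightarrow A=B\cap Y$ for some $B\in\mathcal{M}(X)$ coincides with the paper's. But the two substantive parts are left as plans, not proofs, and both gaps are real. For the direction $B\in\mathcal{M}(X)\Rightarrow B\cap Y\in\mathcal{M}(Y)$ you correctly identify the obstacle --- the positive representation of $-x$ coming from maximality of $B$ in $X$ may use elements of $B\setminus Y$ --- but the proposed fix (perturbing a separating vector as in lemma \ref{lem:MDim}) is not carried out and does not obviously work, since the problem is not one of dimension but of which generators appear in the combination. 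The paper's resolution is combinatorial, via lemma \ref{lem:MaxInd}: a maximal negatively independent set meets every simplex in all but one element, every $S\in\mathcal{S}(Y)$ is a simplex of $X$ entirely contained in $Y$, so $S\cap(B\cap Y)=S\cap B$ still misses exactly one element of $S$, and the ``if'' half of lemma \ref{lem:MaxInd} applied to $Y$ gives $B\cap Y\in\mathcal{M}(Y)$. In your language: for $x\in Y\setminus B$, choose by theorem \ref{thm:Gen} a simplex $S\in\mathcal{S}(Y)$ with $x\in S$; then $S\setminus\{x\}\subset B\cap Y$ and $-x\in\wp\left(S\setminus\{x\}\right)$, which is exactly the membership test you wanted. (You should be aware that this step quietly uses the converse half of lemma \ref{lem:MaxInd} on $X$, which the paper only proves under positive independence of $X$; so your sense that something delicate is happening here is not misplaced, but the route through simplices is the one the paper takes.)

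The ``moreover'' clause also does not need the hyperplane-and-sign analysis you sketch, and that sketch is the part you yourself flag as unfinished. Once the first part is in hand, the argument is three lines: $A\cap Y=B\cap Y$ implies $A\cap Y\subset A\cap B$; the first part puts $A\cap Y$ in $\mathcal{M}(Y)$; and lemma \ref{lem:MDim} (or the computation $\ell(A\cap Y)=\ell\bigl(\bigcup_{S\in\mathcal{S}(Y)}A\cap S\bigr)=\ell(Y)$ used in the paper) shows any element of $\mathcal{M}(Y)$ spans $\mathbb{R}^{d}$, whence $\ell(A\cap B)\supset\ell(A\cap Y)=\mathbb{R}^{d}$. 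Your observation that $Y\subset (A\cap B)\cup\bigl(Y\setminus(A\cup B)\bigr)$ is correct but is a detour; the missing ingredient in both of your gaps is the same one, namely the simplex characterization of $\mathcal{M}(\cdot)$ from lemma \ref{lem:MaxInd}.
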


\begin{proof}
Let $B\in\mathcal{M}\left(X\right)$ and set $A=B\cap Y$. By lemma
\ref{lem:MaxInd}, for every $S\in\mathcal{S}\left(X\right)$, $S\cap B$
contains all but one element of $S$. For $S\in\mathcal{S}\left(Y\right)$,
by lemma , $S\in\mathcal{S}\left(X\right)$, and $S\cap A=S\cap B\cap Y=S\cap B$
contain all but one element from $S$. Hence $A\in\mathcal{M}\left(Y\right)$.

Conversely, let $A\in\mathcal{M}\left(Y\right)$. Since $A$ is a
negatively independent subset of $X$, there is a maximal extension
$B\in\mathcal{M}\left(X\right)$ with $A\subset B$. Clearly, $A\subset B\cap Y$.
If there existed an $x\in\left(B\cap Y\right)\setminus A$, then $A\cup\left\{ x\right\} $
would be negatively independent, a contradiction to $A$ being a maximally
independent subset of $Y$. Hence $A=B\cap Y$.

To show the second statement, assume $A,B\in\mathcal{M}\left(X\right)$,
$A\cap Y=B\cap Y$, and $\ell\left(X\right)=\ell\left(Y\right)=\mathbb{R}^{d}$.
Then $\ell\left(S\cap A\right)=\ell\left(S\right)$ for every $S\in\mathcal{S}\left(Y\right)\subset\mathcal{S}\left(X\right)$,
and further
\[
\ell\left(A\cap Y\right)=\ell\left(\bigcup_{S\in\mathcal{S}\left(Y\right)}A\cap S\right)=\ell\left(\bigcup_{S\in\mathcal{S}\left(Y\right)}S\right)=\ell\left(Y\right).
\]
So,
\[
\ell\left(A\cap B\right)\subset\ell\left(Y\right)=\ell\left(A\cap Y\right)=\ell\left(A\cap B\cap Y\right)\subset\ell\left(A\cap B\right),
\]
which assures $\ell\left(A\cap B\right)=\ell\left(Y\right)$.
\end{proof}
In general, even in two dimensions, $\mathcal{M}\left(X\right)$ can
be arbitrarily large.
\begin{example}
For any $n\geq1$ there is a set $X_{n}\subset\mathbb{R}^{2}$ with
$\left|\mathcal{M}\left(X_{n}\right)\right|=\left|X_{n}\right|=2n$.
\end{example}

To see this, divide the unit circle into $2n$ segments by taking
$n$ antipodal pairs of points with equal distances to the neighbour.
Then the maximal negatively independent sets are exactly those consisting
of $n$consecutive points, and there are exactly $2n$ of them. Thus,
the intersection condition in theorem \ref{thm:Max} is necessary
for the inequality.

We are now in a position to prove theorem \ref{thm:Max}.
\begin{proof}
Let Let $\mathcal{A}\subset\mathcal{M}\left(X\right)$, where $X$
positively spans $\mathbb{R}^{d}$, such that for all $A,B\in\mathcal{A}$,
$A\neq B$, $\wp\left(A\cap B\right)$ does not have full dimension.
Choose a positive basis $Y\subset X$. By lemma \ref{lem:MRed}, for
each $B\in\mathcal{A}$, $B\cap Y\in\mathcal{M}\left(Y\right)$. Moreover,
for $A,B\in\mathcal{A}$ we do not have $A\cap Y=B\cap Y$, because
this would imply $\ell\left(A\cap B\right)=\mathbb{R}^{d}$, contradicting
the assumption that $\wp\left(A\cap B\right)$ does not have full
dimension. Hence $\left|\mathcal{A}\right|\leq\left|\mathcal{M}\left(Y\right)\right|\leq2^{d}$.

Now consider the case $\left|\mathcal{A}\right|=2^{d}$. If $Y=\bigcup\mathcal{A}$
does not span the whole space, then by the separating hyperplane theorem,
$Y$ is contained in a half-space with the boundary containing the
origin. Then there must be another element of $\mathcal{M}\left(X\right)$
contained in the opposite half space, which could be added to $\mathcal{A}$,
contradicting the last inequality. Moreover, we must have $\bigcup\mathcal{A}=X$,
otherwise any $x\in X\setminus Y$ must be contained in the positive
span of one set in $\mathcal{A}$, contradicting its maximality. If
$X$ contained a positive basis, which is not a cross, then by theorem
\ref{thm:MainExt}, $\left|\mathcal{A}\right|<2^{d}$. Thus, $X$
does not contain any simplices apart from 1-simplices. So, $X$ is
a union of at least $d$ 1-simplices, of which some $d$ of them form
a cross.
\end{proof}

\end{document}